\newcommand{\vertiii}[1]{{\left\vert\kern-0.25ex\left\vert\kern-0.25ex\left\vert #1 
		\right\vert\kern-0.25ex\right\vert\kern-0.25ex\right\vert}}
\newcommand{\R}{\mathbb R}
\newcommand{\C}{\mathbb C}
\newcommand{\Z}{\mathbb Z}%
\newcommand{\N}{\mathbb N}
\newcommand{\T}{\mathbb{T}}
\newtheorem{theorem}{Theorem}[section]
\newtheorem{lemma}[theorem]{Lemma}
\theoremstyle{definition}
\theoremstyle{definition}
\newtheorem{remark}[theorem]{Remark}
\numberwithin{equation}{section}
\begin{document}
\title{$L^p$-boundedness of pseudo-differential operators on homogeneous trees}
\author{Tapendu Rana and Sumit Kumar Rano}
\address{Tapendu Rana  \endgraf Department of Mathematics,	\endgraf Indian Institute of Technology Bombay, 	\endgraf Powai, Mumbai 400076, Maharashtra, India.} \email{tapendurana@gmail.com}

\address{Sumit Kumar Rano \endgraf Stat-Math Unit,	\endgraf Indian Statistical Institute,	\endgraf 203 B. T. Road, Kolkata 700108, India.} \email{sumitrano1992@gmail.com} 

\subjclass[2010]{Primary 58J40, 47G30, 43A85 Secondary 39A12, 20E08}
\keywords{pseudo-differential operator, homogeneous tree, harmonic analysis}
%\thanks{First author was supported by research fellowship from CSIR (India).}
	
\begin{abstract}
The aim of this article is to study the $L^{p}$-boundedness of pseudo-differential operators on a homogeneous tree $ \mathfrak{X} $. For $p\in (1,2)$, we establish a connection between the $L^{p}$-boundedness of the pseudo-differential operators on  $ \mathfrak{X} $ and that on the group of integers $\mathbb{Z}$.  We also prove an analogue of the Calderon-Vaillancourt theorem in the setting of homogeneous trees, for $p\in(1,\infty)\setminus\{2\}$.
\end{abstract}
	
\maketitle

\section{Introduction}
The systematic study of pseudo-differential operators has drawn lots of motivation from partial differential equations, quantum mechanics and signal analysis. Indeed, the pioneering works on this subject in the 1960s, as explored for example by H\"ormander \cite{Hormander_1965,Hormander_Ann_1966} and Kohn-Nirenberg \cite{Kohn_Nirenberg_1965}, were guided by a deep connection to elliptic and hypo elliptic equations.  The boundedness results on the classical spaces of harmonic analysis play a special role due to their implications in the regularity of the solutions for appropriate equations. Recently, the calculus of pseudo-differential operators on discrete spaces  has gained popularity, see for e.g. \cite{Botchway_Ruzhansky_2020,Masson_14} due to their natural connection with various problems of quantum ergodicity and in the discretization of continuous problems.

The aim of this article is to study the theory of pseudo-differential operators on homogeneous trees. A homogeneous tree $\mathfrak{X}$ of degree $q+1$ is a connected graph with no loops, in which every vertex is connected to $q+1$ other vertices. Let us first discuss the theory of pseudo-differential operators on homogeneous trees of degree two, that is the group of integers $\mathbb{Z}$. Given a bounded measurable function $\psi$ on $\mathbb{Z}\times\mathbb{T}$, one considers the pseudo-differential operator $T_{\psi}$ defined by
\begin{equation}\label{pdoz1}
T_{\psi}f(l)=\frac{1}{\tau}\int\limits_{\mathbb{T}}\psi(l,s)\mathcal{F}f(s)q^{ils}~ds,\quad\text{for all }l\in\mathbb{Z},
\end{equation}
for a finitely supported function $f$ on $\mathbb{Z}$, where $\tau=2\pi/\log q$, $\mathbb{T}=\mathbb{R}/\tau\mathbb{Z}$ and $\mathcal{F}f$ denotes the Fourier transform of $f$ defined by
\begin{equation}\label{eqn_defn_Foruier_transform}
	\mathcal{F}f(s)=\sum\limits_{d\in\mathbb{Z}}f(d)q^{-ids},\quad\text{for all }s\in\mathbb{T}.
\end{equation}
Such a function $\psi$ is said to be the symbol of the pseudo-differential operator $T_{\psi}$. Alternatively, using \eqref{eqn_defn_Foruier_transform} one can also write \eqref{pdoz1} as follows:
\begin{equation}
	 T_{\psi}f(l)= \sum_{d\in \Z}f(d) \kappa (l,l-d),
\end{equation}
where $$ \kappa(l,l-d) =\frac{1}{\tau} \int\limits_\T \psi (l,s) q^{is(l-d)}.$$
If $T_{\psi}$ is a bounded operator from $L^{p}(\mathbb{Z})$ to itself, we will denote its operator norm by $\vertiii{\psi(\cdot,\cdot)}_{p}$, defined as
$$\vertiii{\psi(\cdot,\cdot)}_{p}  = \sup_{\| f\|_{L^p(\Z)} =1} \|T_\psi f\|_{L^p(\Z)}.$$
When $\psi$ is independent of the space variable, we get the Fourier multiplier operator. We shall denote such a Fourier multiplier operator on $\mathbb{Z}$ by $T_{m}$, which is defined by
\begin{equation}\label{multiplierz}
T_{m}f(l)=\frac{1}{\tau}\int\limits_{\mathbb{T}}m(s)\mathcal{F}f(s)q^{ils}~ds.
\end{equation}
S. Molahajloo studied the pseudo-differential operators on $\mathbb{Z}$ (see \cite{MR2664583}), where the author among other interesting results established sufficient conditions on the Fourier transform of the symbol with respect to the variable on $\mathbb{T}$ to ensure the $L^{p}$-boundedness. For higher dimensional analogue of the above result and a comprehensive pseudo-differential calculus on $ \Z^n $, we refer to \cite{Botchway_Ruzhansky_2020,MR2831664} and the references therein. On the other hand, the authors in \cite{MR2104184} studied pseudo-differential operators on $\mathbb{Z}^{n}$ with symbols which are bounded on $\mathbb{Z}^{n}\times\mathbb{T}^{n}$ together with their derivatives with respect to the second variable. More precisely, they proved the following analogue  of the Calderon-Vaillancourt theorem on $\mathbb{Z}$ (see \cite[Theorem 2.8]{MR2104184}):
\begin{equation}\label{cvonz}
\vertiii{\psi(\cdot,\cdot)}_{p}\leq C\sup\limits_{(l,s)\in\mathbb{Z}\times\mathbb{T},~k=0,1,2}~\left|\frac{d^{k}}{ds^{k}}\psi(l,s)\right|,\quad\text{for all }1\leq p\leq\infty.
\end{equation}

We are concerned with the case $q\geq 2$. The reason for handling this case separately can be ascribed to the difference between the analysis on $\mathbb{Z}$, which has a polynomial growth and that of others which have an exponential volume growth. We begin by discussing the case of multipliers on $\mathfrak{X}$. For unexplained notations, we refer to Section \ref{section2}. Corresponding to a bounded measurable function $m$ on $\mathbb{T}$, the associated Fourier multiplier operator on $\mathfrak{X}$ is defined by
\begin{equation}\label{multiplierx}
T_{m}f(x)=\int\limits_{\mathbb{T}}\int\limits_{\Omega} m(s)\widetilde{f}(s,\omega)p^{1/2-is}(h,\omega)|\mathbf{c}(s)|^{-2}~d\nu(\omega)~ds.
\end{equation}
The $L^{p}$-boundedness of the multiplier operator on homogeneous trees has been studied by many authors, see for instance \cite{Meda_Stefano_2019,Cowling_Meda_98,Cowling_Meda_99}. A celebrated result of J.L. Clerc and E.M. Stein \cite{Clerc_Stein}, in the context of homogeneous trees states that if $T_{m}$ is a bounded operator on $L^{p}(\mathfrak{X})$, then $m$ necessarily extends to a bounded holomorphic function on the strip $S_{p}^{\circ}$, where
$$S_{p}^{\circ}=\{z\in\C: |\Im z|<\delta_p\}\quad\text{and}\quad \delta_p=\left|\frac{1}{p}-\frac{1}{2}\right|,\quad\text{for }p\in[1,\infty).$$
This necessary condition was sharpened by M. Cowling, S. Meda and A. G. Setti \cite[Theorem 2.1]{Cowling_Meda_99} who proved that if $T_{m}$ is a bounded operator on $L^{p}(\mathfrak{X})$, then the function $s\mapsto m(s+i\delta_{p})$ defines a bounded multiplier operator of the form (\ref{multiplierz}) on $L^{p}(\mathbb{Z})$. They also proved the converse by using some additional condition on the multiplier $m$. However, the most important result in this direction, which concerns us, is the following work of D. Celotto,  S. Meda and B. Wr\'{o}bel:

\begin{theorem}[{{\cite[Page 176]{Meda_Stefano_2019}}}]\label{multipliermainresult}
For $p\in[1,\infty)\setminus\{2\}$, the following are equivalent.
\begin{enumerate}
\item The operator $T_{m}$ defined by \eqref{multiplierx} is bounded on $L^{p}(\mathfrak{X})$.
\item The multiplier $m$ extends to a Weyl-invariant function on the strip $S_{p}^{\circ}$ and the function $s\mapsto m(s+i\delta_{p})$ defines a bounded multiplier operator of the form \eqref{multiplierz} on $L^{p}(\mathbb{Z})$.
\end{enumerate}
\end{theorem}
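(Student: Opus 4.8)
The plan is to treat the two implications separately, the forward implication $(1)\Rightarrow(2)$ being an assembly of the cited results and the converse $(2)\Rightarrow(1)$ carrying the analytic weight of the statement.

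For $(1)\Rightarrow(2)$, I would argue as follows. The Helgason--Fourier transform $\widetilde{f}(s,\omega)$ underlying \eqref{multiplierx} satisfies the functional equation that intertwines $s\mapsto -s$ through the Harish-Chandra $\mathbf{c}$-function; consequently $T_m$ depends only on the Weyl-symmetrisation $\tfrac12\big(m(s)+m(-s)\big)$, so boundedness of $T_m$ on $L^p(\mathfrak{X})$ forces the relevant multiplier to be Weyl-invariant. Next, the theorem of Clerc and Stein \cite{Clerc_Stein} guarantees that this multiplier extends to a bounded holomorphic function on $S_p^\circ$, which is the first assertion of (2). Finally, \cite[Theorem 2.1]{Cowling_Meda_99} asserts precisely that if $T_m$ is bounded on $L^p(\mathfrak{X})$ then $s\mapsto m(s+i\delta_p)$ is a bounded multiplier of the form \eqref{multiplierz} on $L^p(\mathbb{Z})$, completing this direction.

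The core of the proof is $(2)\Rightarrow(1)$. By duality it suffices to consider $p\in(1,2)$ (the case $p>2$ following by duality and $p=1$ by a limiting argument), and by interpolating against the elementary $L^2(\mathfrak{X})$ bound furnished by Plancherel's theorem (recall that $T_m$ acts on the transform side as multiplication by $m$, which is bounded since $\delta_2=0$ makes the strip condition vacuous), it is enough to produce an endpoint estimate controlled by the $L^p(\mathbb{Z})$-multiplier norm of $s\mapsto m(s+i\delta_p)$. I would realise $T_m$ as convolution on $\mathfrak{X}$ with the radial kernel $k_m$ obtained by inverting the spherical transform of $m$, and then exploit the holomorphic extension from (2) to shift the contour in this inversion from the real axis to the line $\Im s=\delta_p$. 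The decisive point is that the tree $\mathbf{c}$-function is an explicit rational function of $q^{is}$, so the Plancherel density $|\mathbf{c}(s)|^{-2}$ is an explicit trigonometric weight; shifting the contour therefore produces exactly the exponential factor $q^{-\delta_p|x|}$ that compensates the exponential volume growth of $\mathfrak{X}$ and, after this renormalisation, recasts the tree kernel in terms of the $\mathbb{Z}$-convolution kernel $\kappa$ attached to $s\mapsto m(s+i\delta_p)$.

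To convert the hypothesised $\ell^p(\mathbb{Z})$-boundedness of convolution by $\kappa$ into the desired $L^p(\mathfrak{X})$ bound, I would run a Stein analytic interpolation for the family $z\mapsto T_{m(\cdot+iz)}$ across the strip $0\le \Im z\le \delta_p$, pairing the $L^2$ estimate on the edge $\Im z=0$ with the transferred convolution estimate on $\Im z=\delta_p$, and then invoke a Kunze--Stein / Herz majorisation to pass from the weighted radial convolution inequality on $\mathfrak{X}$ to boundedness on all of $L^p(\mathfrak{X})$. I expect the main obstacle to be precisely this transference step: making the contour shift rigorous in the presence of the singularities of $|\mathbf{c}(s)|^{-2}$ inside $S_p^\circ$ (which may contribute residue terms assembling into a better-behaved, absolutely summable remainder kernel) and controlling the genuinely non-radial contributions uniformly. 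Verifying the admissibility and the growth of the analytic family so that the three-lines lemma applies, while keeping the resulting constant proportional to the $\mathbb{Z}$-multiplier norm, is the delicate part.
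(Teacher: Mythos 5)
First, a point of order: the paper does not prove Theorem \ref{multipliermainresult} at all. It is quoted from \cite{Meda_Stefano_2019}, and the only proof-related material in the paper is the sketch, at the start of Section \ref{PDO_on_tree_1<p<2}, of how Celotto, Meda and Wr\'obel argued. Your forward implication $(1)\Rightarrow(2)$ is the correct assembly of the cited necessity results (Clerc--Stein for the holomorphic extension to $S_p^{\circ}$, \cite[Theorem 2.1]{Cowling_Meda_99} for the $L^p(\mathbb{Z})$-multiplier statement at height $\delta_p$), and this matches the paper's own account; the reduction of $T_m$ to the even part of $m$ is also standard. That half is fine.

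The converse is where your argument has a genuine gap. You propose to obtain the $L^p(\mathfrak{X})$ bound by Stein analytic interpolation for the family $z\mapsto T_{m(\cdot+iz)}$, pairing the $L^2$ bound at $\Im z=0$ with ``the transferred convolution estimate on $\Im z=\delta_p$''. But that estimate at $\Im z=\delta_p$ \emph{is} the target bound (the operator of interest sits on the edge of the strip, not in its interior), so the three-lines lemma can only return $L^r$ bounds for $r$ strictly between $p$ and $2$ and never the $L^p$ bound itself; the step is circular, and the actual conversion of the hypothesis on $s\mapsto m(s+i\delta_p)$ into an $L^p(\mathfrak{X})$ estimate is left undone. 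The proof in \cite{Meda_Stefano_2019} --- which the present paper describes and then adapts to symbols in Theorems \ref{tpsiminus} and \ref{thm:tpsi_plus_bdd} --- is direct: one writes $T_m$ in Iwasawa coordinates on $NA$, shifts the contour to $\Im s=\delta_p$ to extract the factor $q^{-|g^{-1}h\cdot o|/p}$ (your contour-shift instinct is correct, and your worry about residues is unfounded: after symmetrising, the integrand carries $\mathbf{c}(-s)^{-1}$, which is holomorphic on the relevant closed upper half of the strip, so no residue terms appear), and then splits the kernel according to the sign of $l-j$ into $T_m^{+}+T_m^{-}$. The piece $T_m^{-}$ is reduced, via a transference theorem resting on Lemma \ref{nproperties}(2), to the hypothesised $\mathbb{Z}$-multiplier bound of the form \eqref{multiplierz}; the piece $T_m^{+}$ uses only $\sup_{S_p}|m|$ together with a Herz-type convolution inequality. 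Your ``Kunze--Stein / Herz majorisation'' gestures at the $T_m^{+}$ half, but without the $\pm$ decomposition and the explicit transference for $T_m^{-}$ the argument does not close; likewise, $p=1$ (where $\delta_1=1/2$) requires its own treatment and is not reached ``by a limiting argument''.
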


\noindent Here and subsequently, a function $\eta$ defined on $S^{\circ}_{p}$ is said to be Weyl-invariant if $\eta$ is holomorphic on $S^{\circ}_{p}$, $\eta(z)=\eta(-z)$ and $\eta(z)=\eta(z+\tau)$ for all $z$ in $S^{\circ}_{p}$.

Now that the characterization of  $ L^p $ Fourier multipliers is completely settled, it is natural to inquire, what happens if one replaces the multiplier $ m(s) $ by a symbol $ \Psi(x,s)? $ We find some natural conditions on the symbol of a pseudo-differential operator on $\mathfrak{X}$ which implies the continuity of the corresponding operator. Explicitly, we contribute in two ways:
\begin{enumerate}
\item  Firstly, we shall establish a connection between the $L^{p}$-boundedness of the pseudo-differential operators on $\mathfrak{X}$ and that on the group of integers $\mathbb{Z}$. 
\item  Secondly, we provide a sufficient condition on the symbol $\Psi$, which will guarantee the $L^p$-boundedness of the corresponding pseudo-differential operator on homogeneous trees.
\end{enumerate} Let $\Psi:\mathfrak{X}\times\mathbb{T}\rightarrow\mathbb{C}$ be a measurable function. We define the pseudo-differential operator associated with the symbol $\Psi$, by
\begin{equation}\label{pdotree}
	T_{\Psi}f(h)=c_{G}\int\limits_{\mathbb{T}}\int\limits_{\Omega}\Psi(h,s)\widetilde{f}(s,\omega)p^{1/2-is}(h,\omega)|\mathbf{c}(s)|^{-2}~d\nu(\omega)~ds.
\end{equation}
Then we have proved the following result for the case $1<p<2$. 
\begin{theorem}\label{mainresult1}
	Let $1<p<2$. Suppose that $\Psi:\mathfrak{X}\times S_{p}\rightarrow\mathbb{C}$ is a function satisfying the following properties:
\begin{enumerate}
\item For each $x$ in $\mathfrak{X}$, $z\mapsto\Psi(x,z)$ is Weyl-invariant on the strip $S^{\circ}_{p}$ and continuous on $S_{p}$.
\item The symbol $\Psi$ is uniformly bounded on $\mathfrak{X}\times S_{p}$ , that is,
$$\sup\limits_{x\in\mathfrak{X},\,z\in S_{p}}|\Psi(x,z)|<\infty.$$
\item For each $n$ in $N$, $(l,s) \mapsto \Psi(n\sigma^{l},s-i\delta_{p})$ defines a bounded pseudo-differential operator from $L^{p}(\mathbb{Z})$ to itself and
$$\sup\limits_{n\in N}\vertiii{\Psi_{-\delta_{p}}(n\cdot,\cdot)}_{p}<\infty.$$
\end{enumerate}
Then $T_{\Psi}$ defines a bounded operator from $L^{p}(\mathfrak{X})$ to itself. Moreover, there exists a constant $C_{p}>0$ such that
$$\|T_{\Psi}f\|_{L^{p}(\mathfrak{X})}\leq C_{p}\left(\|\Psi\|_{L^{\infty}(\mathfrak{X}\times S_{p})}+\sup\limits_{n\in N}\vertiii{\Psi_{-\delta_{p}}(n\cdot,\cdot)}_{p}\right)\|f\|_{L^{p}(\mathfrak{X})}.$$
\end{theorem}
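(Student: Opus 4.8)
The plan is to reduce the boundedness of $T_{\Psi}$ on $L^{p}(\mathfrak{X})$ to the fiberwise pseudo-differential boundedness on $\mathbb{Z}$ furnished by hypothesis (3), using the holomorphic extendability of the symbol to move the spectral variable off the real torus. The mechanism is the one underlying Theorem \ref{multipliermainresult}: since $z\mapsto\Psi(x,z)$ is Weyl-invariant on $S^{\circ}_{p}$ and continuous up to the boundary, and since for finitely supported $f$ the Helgason--Fourier transform $\widetilde{f}(\cdot,\omega)$ extends holomorphically, Cauchy's theorem lets me shift the contour in \eqref{pdotree} from $\{\Im s=0\}$ to the critical line $\{\Im s=-\delta_{p}\}$. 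On this line the Poisson kernel $p^{1/2-is}(h,\omega)$ acquires a factor $q^{-\delta_{p}\langle h,\omega\rangle}$ whose decay offsets the exponential volume growth of $\mathfrak{X}$; this is precisely what makes an $L^{p}(\mathbb{Z})$-estimate (a polynomial-growth, amenable setting) sufficient to control an operator on the exponentially growing tree.

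First I would fix a reference vertex and a boundary point $\omega_{*}$ and use the horocyclic (Iwasawa-type) coordinates $h=n\sigma^{l}$, $n\in N$, $l\in\mathbb{Z}$, which identify $\mathfrak{X}$ with $N\times\mathbb{Z}$ and realize $L^{p}(\mathfrak{X})$ as a weighted mixed-norm space over the fibers, the weight being the modular factor of the non-unimodular ``$ax+b$''-type group $N\rtimes\mathbb{Z}$ acting simply transitively on $\mathfrak{X}$. The point of shifting to $\{\Im s=-\delta_{p}\}$ is that the gained factor $q^{-\delta_{p}l}$ absorbs this modular weight, turning the fiber analysis into an \emph{unweighted} $L^{p}(\mathbb{Z})$ problem; this is the structural reason the shift by $\delta_{p}=|1/p-1/2|$ is the correct one. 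After the shift I would rewrite the inner $\int_{\Omega}\,d\nu(\omega)$ together with $\widetilde{f}$ so that, in the level variable $l$, the tree Fourier inversion collapses to the Fourier inversion on $\mathbb{Z}$ of \eqref{multiplierz}. The outcome should be an identity expressing $T_{\Psi}f(n\sigma^{l})$, for each fixed $n$, as the $\mathbb{Z}$-pseudo-differential operator with symbol $(l,s)\mapsto\Psi(n\sigma^{l},s-i\delta_{p})=\Psi_{-\delta_{p}}(n\sigma^{l},s)$ applied to an appropriate slice of $f$, up to a remainder coming from the part of the boundary integral that does not localize at $\omega_{*}$.

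With such a decomposition in hand, hypothesis (3) controls the principal, diagonal-in-$N$ contribution: for each $n$ the fiber operator has $L^{p}(\mathbb{Z})$-norm at most $\sup_{n\in N}\vertiii{\Psi_{-\delta_{p}}(n\cdot,\cdot)}_{p}$, and summing the $p$-th powers over $n\in N$ via Fubini reassembles these into the bound $\sup_{n}\vertiii{\Psi_{-\delta_{p}}(n\cdot,\cdot)}_{p}\,\|f\|_{L^{p}(\mathfrak{X})}$. The remainder term, which encodes the coupling between distinct horocycles, I would estimate crudely using hypothesis (2): on the shifted contour the operator kernel inherits the decay $q^{-\delta_{p}\,d(\cdot,\cdot)}$, so a Schur-type (convolution) estimate against the uniform bound $\|\Psi\|_{L^{\infty}(\mathfrak{X}\times S_{p})}$ controls it by $C_{p}\,\|\Psi\|_{L^{\infty}(\mathfrak{X}\times S_{p})}\|f\|_{L^{p}(\mathfrak{X})}$. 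Adding the two contributions yields the asserted inequality. Throughout I would first argue for finitely supported $f$, where all interchanges of sum and integral and the contour shift are legitimate, and then pass to general $f\in L^{p}(\mathfrak{X})$ by density and the a priori bound.

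The main obstacle is the rigorous execution of the contour shift and the clean identification of the fiber operator with the one in hypothesis (3). Two difficulties stand out. The Plancherel density $|\mathbf{c}(s)|^{-2}$ is not holomorphic, so before shifting I must factor it as $\mathbf{c}(s)^{-1}\mathbf{c}(-s)^{-1}$ (up to constants) and use the functional equation together with Weyl-invariance to read the integrand as a genuine holomorphic function on $S^{\circ}_{p}$, tracking the zeros and poles of $\mathbf{c}$ inside the strip so that no unwanted residues appear. Secondly, the boundary integral over $\Omega$ genuinely mixes the horocyclic fibers, so the split into a principal term governed by $\mathbb{Z}$-transference and a remainder governed by the $L^{\infty}$-bound must be arranged so that the remainder truly enjoys the extra decay from the shift and the reassembly over $N$ preserves the uniformity in hypothesis (3). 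Verifying that the modular weight is \emph{exactly} cancelled---so that the fiber problem is honestly the unweighted $L^{p}(\mathbb{Z})$ operator of \eqref{pdoz1} rather than a weighted variant---is the linchpin of the argument.
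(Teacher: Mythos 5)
Your overall strategy---shift the spectral contour to $\Im z=\pm\delta_{p}$, pass to the horocyclic coordinates $N\times\mathbb{Z}$, identify a principal fiber operator controlled by hypothesis (3), and estimate the rest by $\|\Psi\|_{L^{\infty}}$---is the same as the paper's, and several of your structural observations are correct (the shifted Poisson factor cancelling the modular weight $q^{-j}$; the need to factor $|\mathbf{c}(s)|^{-2}=\mathbf{c}(s)\mathbf{c}(-s)$ and invoke Weyl invariance before shifting). But the decomposition you propose, a ``diagonal-in-$N$ principal term'' versus a ``remainder encoding the coupling between distinct horocycles,'' is not the one that works, and you have flagged rather than resolved the point on which the whole argument turns: why the fiber operator in the level variable $l$ is honestly the unweighted $\mathbb{Z}$-pseudo-differential operator of hypothesis (3). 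The reason is geometric and only half true. After the contour shift and the natural changes of variables, the kernel depends on the levels through $q^{is|n^{-1}\sigma^{l-j}\cdot o|}$, and the linearization $|n^{-1}\sigma^{k}\cdot o|=|n^{-1}\cdot o|-k$ holds \emph{only for} $k\le |n^{-1}\cdot o|/2$ (Lemma \ref{nproperties}(2)); for larger $k$ it fails, as the paper's counterexample shows. Hence the operator must be split according to the sign of $l-j$: on the piece with $l-j\le -1$ the kernel becomes a function of $l-j$ modulated by the symbol and one really obtains, after translating by $|n^{-1}\cdot o|$, a truncation of the $\mathbb{Z}$-operator of hypothesis (3); on the piece with $l-j\ge 0$ no such identification is available and one must argue differently. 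A split by localization at a boundary point $\omega_{*}$ does not produce this dichotomy.

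Two further consequences follow that your plan does not account for. First, even on the good piece the truncated operator is not the full operator of hypothesis (3); the paper must write it as $\mathcal{I}_{+}-\mathcal{I}_{-}-\mathcal{I}_{n}$, where $\mathcal{I}_{+}$ is the genuine $\mathbb{Z}$-PDO, $\mathcal{I}_{-}$ is controlled by a second contour shift to $-\delta_{p}$ giving the summable decay $q^{2\delta_{p}(l-j)}$ for $l-j\le -1$, and $\mathcal{I}_{n}$ contributes a factor $(1+|n^{-1}\cdot o|)$ that must then be beaten by the integrability of $(1+|n^{-1}\cdot o|)\,q^{-|n^{-1}\cdot o|/p}$ over $N$ (Lemma \ref{qpn}); none of this growth-in-$n$ bookkeeping appears in your sketch, and the $N$-integration enters through Minkowski's inequality rather than a fiberwise Fubini. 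Second, the piece with $l-j\ge 0$ is estimated in the paper by duality with no reference to hypothesis (3) at all, using the evenness of the Abel transform to convert $\int_{N}q^{-|m\sigma^{l}\cdot o|/p}\,dm$ into $q^{l}\int_{N}q^{-|m\sigma^{-l}\cdot o|/p}\,dm$ and then Lemma \ref{nproperties}(2) again; the closing geometric series $\sum_{l\ge 0}q^{l(1/p'-1/p)}$ is precisely where $p<2$ enters. So while your high-level picture is right, the proof as proposed has a genuine gap at its self-declared ``linchpin'': the identification of the fiber operator with the hypothesis-(3) operator is only possible on half of the operator, and the mechanism that makes it possible, together with the error terms it generates, is absent from your argument.
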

We next turn to the case $2<p<\infty$. The $L^{p}$-boundedness of the multiplier operator \eqref{multiplierx} (for $p>2$) follows from a straightforward duality argument, using the fact that the kernel of the operator is radial. However, the duality argument does not work for pseudo-differential operators. We overcome this obstacle by imposing a rather stronger condition on the symbol $\Psi$. More precisely, we prove the following analogue of  Calderon-Vaillancourt theorem (\cite[Theorem 2.8]{MR2104184}) in the setting of homogeneous trees.
\begin{theorem}\label{mainresult2}
Let $2<p<\infty$. Suppose that $ \Psi:\mathfrak{X}\times S_{p}\rightarrow\mathbb{C}$ is a function satisfying the following properties:
	\begin{enumerate}
		\item For each $x$ in $\mathfrak{X}$, $z\mapsto\Psi(x,z)$ is Weyl-invariant on the strip $S^{\circ}_{p}$.
		\item The functions $d^{k}\Psi/dz^{k}$  extend continuously on $S_{p}$ and satisfy the differential inequalities
\begin{equation}\label{eqn_regular_condn_Psi_thm2}
			\left|\frac{d^{k}}{dz^{k}}\Psi(x,z)\right|\leq C_{k},\quad\text{for all }k=0,1,2;\text{ for all }x\in\mathfrak{X},z\in S_{p}.
\end{equation}
	\end{enumerate}
	Then $T_{\Psi}$ defines a bounded operator from $L^{p}(\mathfrak{X})$ to itself. Moreover, there exists a constant $C_{p}>0$ such that
$$\|T_{\Psi}f\|_{L^{p}(\mathfrak{X})}\leq C_{p}\left(~\sup\limits_{(x,z)\in\mathfrak{X}\times S_{p},~k=0,1,2}~\left|\frac{d^{k}}{dz^{k}}\Psi(x,z)\right|~\right) \|f\|_{L^{p}(\mathfrak{X})},$$
for all $f\in L^{p}(\mathfrak{X})$.
\end{theorem}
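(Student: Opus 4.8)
The plan is to realize $T_\Psi$ as in \eqref{pdotree}, to push the spectral contour off the real axis so that the Poisson factor becomes $L^p$-adapted, and then to transfer the estimate to $\mathbb Z$ slice by slice along horocycles, in the spirit of the argument behind Theorem~\ref{mainresult1} but now with the stronger hypotheses \eqref{eqn_regular_condn_Psi_thm2} compensating for the absence of a duality argument. First I would exploit that, for each fixed $x$, the map $z\mapsto\Psi(x,z)$ is holomorphic on $S_p^\circ$ with derivatives up to order two continuous on the closed strip $S_p$, in order to shift the $s$-integration in \eqref{pdotree} from $\mathbb T$ to the lower boundary line $\Im z=-\delta_p$; holomorphy in the open strip guarantees that no residual terms appear. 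Since $\tfrac12-\delta_p=\tfrac1p$ for $p>2$, the factor $p^{1/2-iz}(h,\omega)$ evaluated on this contour becomes $p^{1/p}(h,\omega)\,p^{-is}(h,\omega)$, i.e. precisely the $L^p$-normalized Poisson kernel, whose $p$-th power integrates to a constant over $\Omega$ by the normalization $\int_\Omega p(h,\omega)\,d\nu(\omega)=1$. Writing a generic vertex in horocyclic coordinates $h=n\sigma^l$ with $n\in N$ and $l\in\mathbb Z$, the shifted operator factorizes so that, for each fixed $n$, its action in the height variable $l$ is exactly the pseudo-differential operator on $\mathbb Z$ with symbol $(l,s)\mapsto\Psi(n\sigma^l,s-i\delta_p)=\Psi_{-\delta_p}(n\sigma^l,s)$.

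Second, I would control these $\mathbb Z$-slices uniformly in $n$ by means of the Calderón--Vaillancourt theorem on $\mathbb Z$, \eqref{cvonz}. Because $z\mapsto\Psi(x,z)$ is holomorphic, restriction to the line $\Im z=-\delta_p$ converts the complex derivatives $d^k\Psi/dz^k$ into the genuine $s$-derivatives of $s\mapsto\Psi(x,s-i\delta_p)$; as the line $\Im z=-\delta_p$ lies in $S_p$, the differential inequalities \eqref{eqn_regular_condn_Psi_thm2} assumed on the whole closed strip yield
$$\sup_{n\in N}\vertiii{\Psi_{-\delta_p}(n\cdot,\cdot)}_{p}\le C\sup_{(x,z)\in\mathfrak X\times S_p,\ k=0,1,2}\left|\frac{d^k}{dz^k}\Psi(x,z)\right|,$$
with $C$ independent of $n$. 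This is exactly the step where the second-order regularity is indispensable and where the argument bypasses the duality that is available for the multipliers of Theorem~\ref{multipliermainresult} but fails once $\Psi$ depends on the space variable.

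Third, I would assemble the global bound. Expressing $\|T_\Psi f\|_{L^p(\mathfrak X)}$ through the horocyclic decomposition and applying Minkowski's integral inequality to move the $L^p$-norm inside the $\omega$- (and $s$-) integrations, one is reduced to integrating the uniform slice estimate of the previous paragraph against the weight $p^{1/p}(h,\omega)$. The $L^p$-normalization of this Poisson factor then makes the boundary integral converge and reproduces, up to a constant, the norm $\|f\|_{L^p(\mathfrak X)}$, so that combining with the uniform $L^p(\mathbb Z)$ operator-norm bound gives
$$\|T_\Psi f\|_{L^p(\mathfrak X)}\le C_p\left(\ \sup_{(x,z)\in\mathfrak X\times S_p,\ k=0,1,2}\ \left|\frac{d^k}{dz^k}\Psi(x,z)\right|\ \right)\|f\|_{L^p(\mathfrak X)},$$
first for finitely supported $f$ and then for all $f\in L^p(\mathfrak X)$ by density.

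I expect the main obstacle to be the rigorous justification of this transference for $p>2$ without recourse to duality: one must legitimately shift the contour for the analytically continued Helgason--Fourier transform $\widetilde f(\cdot,\omega)$ (which is where finite support of $f$ is first used), interchange the $s$-, $\omega$- and summation variables, and verify that the exponential volume growth of $\mathfrak X$ is exactly absorbed by the gain $p^{1/p}(h,\omega)$ coming from the shift, matched against the Plancherel density $|\mathbf c(s)|^{-2}$ on the displaced contour. The Weyl-invariance of $z\mapsto\Psi(x,z)$ is used throughout to ensure that the shifted symbol and the resulting operator are well defined and single-valued, and the continuity of the derivatives up to the boundary of $S_p$ is what licenses all the uniform estimates on the line $\Im z=-\delta_p$.
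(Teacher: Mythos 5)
Your plan is essentially to run the $1<p<2$ machinery of Theorem \ref{mainresult1} again, with the Calder\'on--Vaillancourt bound \eqref{cvonz} supplying the uniform control $\sup_{n}\vertiii{\Psi_{-\delta_p}(n\cdot,\cdot)}_{p}<\infty$ on the $\mathbb Z$-slices. That transference, however, is exactly what breaks down for $p>2$, and this is the genuine gap. Two separate things fail. First, the reduction of a slice to a pseudo-differential operator on $\mathbb Z$ rests on Lemma \ref{nproperties}\,(2), i.e.\ on the identity $|n^{-1}\sigma^{l-j}\cdot o|=|n^{-1}\cdot o|-(l-j)$, which holds only on the half of the kernel where $l-j\leq 0$; on the other half ($T_{\Psi}^{+}$) there is no such factorization, which is why the paper treats $T_{\Psi}^{+}$ by a completely different argument even for $p<2$. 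Second, and more fatally, the assembly step requires integrating the slice bounds against the weight produced by the contour shift over $N$; this is Lemma \ref{qpn}, which demands $\int_N q^{-|n\cdot o|/r}\,dn<\infty$, i.e.\ $r<2$. Your shift to $\Im z=-\delta_p$ produces the weight $q^{-|\cdot|/p}$ with $1/p<1/2$, which is \emph{not} integrable on $N$ against the exponential growth $\mu(\{n:|n\cdot o|=2k\})\sim q^{k}$. With the correct shift (to $+i\delta_p=+i\delta_{p'}$, giving $q^{-|\cdot|/p'}$ with $p'<2$) the $N$-integral converges, but then in the height variable the exponentials cancel exactly and the sum over $l\in\mathbb Z_+$ diverges. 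You flag ``the exponential volume growth is exactly absorbed by the gain $p^{1/p}$'' as the main obstacle, but you offer no mechanism for it, and in fact it is not absorbed: something extra is needed to make the height sum converge.

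That something extra is where the hypothesis \eqref{eqn_regular_condn_Psi_thm2} is actually used in the paper, and it is not via \eqref{cvonz} at all. The paper estimates the bilinear form $\langle T_{\Psi}f,\varphi\rangle$ directly, splits into $T_{\Psi}^{\pm}$, shifts the contour to $\mathbb T+i\delta_{p}$ so that the kernel carries $q^{-|m\sigma^{l}\cdot o|/p'}$, and then, for the half $l>0$ where the exponentials are critical, integrates by parts \emph{twice} in $s$ in the oscillatory integral $\int_{\mathbb T}\Psi_{\delta_p}(\cdot,s)\mathbf{c}_{-\delta_p}(-s)^{-1}q^{is|m\sigma^{l}\cdot o|}\,ds$ to gain the factor $|m\sigma^{l}\cdot o|^{-2}$. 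Combined with Lemma \ref{nproperties}\,(2) and the evenness of the Abel transform, this turns the critical height sum into $\sum_{l\in\mathbb N}l^{-2}$ and H\"older then closes the estimate; the half $l<0$ has genuine exponential decay $q^{l(1/p'-1/p)}$ and needs only the $L^\infty$ bound on $\Psi$. So the second-order regularity is a stationary-phase-type input producing polynomial decay of the kernel in the height variable, not an input to a $\mathbb Z$-operator-norm bound. Without this integration by parts (or an equivalent device producing summable decay in $l$), your outline cannot be completed.
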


\begin{remark}
\begin{enumerate}
\item When $ \Psi$ is a multiplier, then one can write $ T_\Psi $ as a convolution operator with a $ K $-biinvariant kernel. This plays a crucial role in the multiplier case. However, in the case of pseudo-differential operators, one	cannot use such method due to an extra variable ``$x$" in the symbol $ \Psi(x,s) $. This is the fundamental difference between the multiplier case and our situation. 
\item In comparison to the results of the lattice $ \Z $, the holomorphic extension property of the symbol $\Psi$ is a new and necessary condition  for the pseudo-differential operator $T_{\Psi}$ to be bounded on $L^p(\mathfrak{X})$, as in the case of multipliers on $\mathfrak{X}$.
\item  A similar result is also known on rank one noncompact symmetric spaces \cite [Theorem 1.6]{PR_PDO_22}. However it is worth mentioning that for $ p\in (1,2) $, we get an improvement in the theorem above. This comes as a consequence of the discrete nature  of the homogeneous tree. 
\item In \cite{Masson_14}, Masson introduced pseudo-differential operators on $\mathfrak{X}$ associated with a more general symbol class and proved their $L^{2}$-boundedness. The key ingredient of the proof is that the kernels of the operators have a rapid decay property. The proof of the rapid decay property is  essentially a paraphrase of the $L^{2}$-Schwartz space isomorphism theorem given in \cite[Theorem 2]{MR1680799}. We remark that such an isomorphism theorem is not known for $p\neq 2$.
\item  We would like to mention that Theorem \ref{mainresult1} is sharper than Theorem \ref{mainresult2} in the sense that if $ \Psi $ satisfies the hypothesis of Theorem \ref{mainresult2} for $ 1<p<2 $, then using \eqref{cvonz}, $ T_\Psi $ can be extended as a $ L^p(\mathfrak{X})$ bounded operator.
\end{enumerate}
\end{remark}	

The article is organized as follows: Section \ref{section2} sets the necessary background of harmonic analysis on homogeneous trees. In Section  \ref{PDO_on_tree_1<p<2}, we shall express $T_{\Psi}$ as an integral operator and prove Theorem \ref{mainresult1}. Finally, in Section \ref{bddness_of_T_Psi_p>2} we will prove Theorem \ref{mainresult2}.

\section{Notation and preliminaries}\label{section2}

\subsection{Generalities}
The letters $\N$, $\Z$, $\R$ and $\C$ will respectively denote the set of all natural numbers, the ring of integers, and the fields of real and complex numbers. We denote the set of all non-negative integers by $ \Z_+ $. For $ z \in \C $, we use the notations $\Re z$ and $\Im z$ for real and imaginary parts of $z$ respectively. We shall follow the standard practice of using the letters $C$, $C_1$, $C_2$ etc. for positive constants, whose value may change from one line to another. Occasionally the constants will be suffixed to show their dependencies on important parameters. For any Lebesgue exponent $ p \in (1,\infty)$, let $p^{\prime}$ denote the conjugate exponent $p/(p-1)$. We further assume $p^{\prime}=\infty$ when $p=1$ and vice-versa. For $p\in[1,\infty)$, let
$$\delta_p=\left|\frac{1}{p}-\frac{1}{2}\right|,\quad\text{and}\quad S_{p}=\{z\in\mathbb{C}:|\Im z|\leq\delta_{p}\}.$$
We assume $\delta_{\infty}=1/2$. It is important to note that $\delta_{p}=\delta_{p^{\prime}}$ and $S_{p}=S_{p^{\prime}}$ for all $p\in[1,\infty]$. We shall henceforth write $S^{\circ}_{p}$ and $\partial S_{p}$ to denote the usual topological interior and the boundary of $S_{p}$ respectively. Given a  function $F$ on $S_{p}$, for every $v\in [-\delta_{p}, \delta_{p}]$, we denote by $F_{v}$, the function on $\R$ which is defined by $F_v(u) = F(u+iv)$.

\subsection{Homogeneous Trees}
Here we review some general facts about the homogeneous trees, most of which are already known. Details regarding harmonic analysis on trees can be found in the books \cite{Alessandro_Nebbia,Talamanca_Alessandro_1982}.

As described earlier, a homogeneous tree $\mathfrak{X}$ of degree $q+1$ is a connected and acyclic graph, in which every vertex has $q+1$ neighbours. We identify $\mathfrak{X}$ with the set of all vertices where the natural distance $d(x, y)$ between any two vertices $x$ and $y$ is defined as the number of edges between them. Let $o$ be a fixed but arbitrary reference point in $\mathfrak{X}$. We shall henceforth write $|x|$ to denote $d(o,x)$. The tree $\mathfrak{X}$ being discrete is naturally equipped with the counting measure. Let $G$ be the group of isometries of the metric space $(\mathfrak{X}, d)$ and $K$ be the stabilizer of $o$ in $G$. It is known that $K$ is a maximal compact subgroup of $G$. The map $ g\mapsto g\cdot o$ identifies $\mathfrak{X}$ with the coset space $G/K$, so that functions on $\mathfrak{X}$ correspond to $K$-right invariant functions on $G$. Furthermore, the radial functions on $\mathfrak{X}$, that is, functions which only depend on $|x|$, correspond to $K$-biinvariant functions on $G$.

\subsection{The height function and the boundary of \texorpdfstring{$\mathfrak{X}$}{X}}
An infinite geodesic ray $\omega$ is a one-sided sequence $\{\omega_{j}:j=0,1,\ldots\}$ where the $\omega_{j}$ are in $\mathfrak{X}$ and $d(\omega_i, \omega_j)=|i-j|$ for all non-negative integers $i$ and $j$. We say that an infinite geodesic $\omega$ starts from $x$ if $\omega_{0}=x$. For a given infinite geodesic ray $\omega$, we define the associated height function $ h_\omega :\mathfrak{X} \rightarrow\Z$  by
\begin{equation}\label{heightfunction}
h_\omega(x)=\lim\limits_{j \rightarrow \infty}\left(  j- d(x, \omega_j)\right).
\end{equation}
For details, we refer to \cite{Cowling_Meda_98}. The height function $h_{\omega}$ is the discrete analogue of the Busemann function in Riemannian geometry. We note that for each $x$ and $\omega$, the sequence in (\ref{heightfunction}) is eventually constant and hence the limit exists. Furthermore, for every $m \in \Z$, we define the $\omega$-horocycles
\begin{equation}\label{horocycle}
\mathfrak{H}(\omega,m) =\{ x \in \mathfrak{X} :  h_{\omega}(x)= m \},
\end{equation}
and see that $\mathfrak{X}$ decomposes into the disjoint union
\begin{equation}\label{horocycledecomposition}
\mathfrak{X}=\bigcup\limits_{m\in\mathbb{Z}}\mathfrak{H}(\omega,m) .
\end{equation}

Two infinite geodesics $\omega=\{\omega_j:j=0,1,\ldots\}$ and $\omega^{\prime}=\{\omega^{\prime}_j:j=0,1,\ldots\}$ are said to be equivalent if they meet at infinity, that is, there exists  $k$, $N \in \N $ such that for all $ j\geq N$, $\omega_j =\omega^{\prime}_{j+k}$. This identification is an equivalence relation and partitions the set of all infinite geodesics into equivalence classes. The equivalence class of a generic geodesic ray $\omega$ will henceforth be denoted by $[\omega]$. In every equivalence class, there exists a unique geodesic ray starting from $o$. The boundary $\Omega$ is defined as the set of all infinite geodesic rays starting at $o$. It is known that $K$ acts transitively on $\Omega$ via the map $k\mapsto k\cdot\omega$.

\subsection{The Groups \texorpdfstring{$A$}{A} and \texorpdfstring{$N$}{N}}
From now on, we fix a doubly infinite geodesic $\omega_{0}$ of the form $\{\omega^{0}_{j}:j\in\mathbb{Z}\}$ such that $\omega^{0}_{0}=o$ and $d(\omega^{0}_i, \omega^{0}_j)=|i-j|$ for all integers $i$ and $j$. Furthermore, we will denote the infinite geodesic $\omega^{+}_{0}$ by $\{\omega^{0}_{j}:j=0,1,\ldots\}$. Let $\sigma$ in $G$ be a translation of step one along the doubly infinite chain $\omega_{0}$. More explicitly, let $\sigma(\omega^{0}_{j})=\omega^{0}_{j+1}$ for all $j$ in $\mathbb{Z}$. We denote by $A$, the subgroup generated by $\sigma$ and define it as
$$A=\{\sigma^{j}:j\in\mathbb{Z}\}.$$
It is important to note that $A$ is abelian and isomorphic to the group $\mathbb{Z}$. Let $G_{\omega^{+}_{0}}$ be the stabilizer of the equivalence class $[\omega^{+}_{0}]$ in $G$. We define the group $N$ by
\begin{equation}\label{groupn}
N=\{n\in G^{+}_{\omega_{0}}:n\cdot x=x\text{ for some }x\in\mathfrak{X}\}.
\end{equation}
It is well known that the group $N$ is locally compact. We shall denote the Haar measure on $N$ by $\mu$. From \cite[Lemma 3.3]{Alessandro_2002} it is evident that $N$ is unimodular and the measure $\mu$ is normalised by the condition $\mu(N\cap K)=1$. Moreover, it was proved in \cite[Corollary 3.2]{Alessandro_2002} that the subgroup $N$ acts transitively on every $\omega^{+}_{0}$-horocycle $\mathfrak{H}(\omega^{+}_{0},m)$ defined by (\ref{horocycle}). By using this fact together with the horocyclic decomposition (\ref{horocycledecomposition}), Veca \cite[Theorem 3.5]{Alessandro_2002} proved the following Iwasawa-type decomposition of the group $G$ and arrived at the following integral formula for functions on $G$.

\begin{theorem}\label{iwasawa}
Let $G$, $N$, $K$ and $A$ be as defined above. Then for every $g\in G$, there exist $n\in N$, $j\in\mathbb{Z}$ and $k\in K$ such that $g=n\sigma^{j}k$. Furthermore, if $f$ is a compactly supported function defined on $G$, then
	$$\int\limits_{G}f(g)~dg=\int\limits_{N}\sum\limits_{j\in\mathbb{Z}}~\int\limits_{K}f(n\sigma^{j}k)~q^{-j}~dk~d\mu(n).$$
\end{theorem}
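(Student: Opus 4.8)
The plan is to prove the two assertions separately: the pointwise decomposition $g=n\sigma^j k$, and then the integration formula, both of which follow from the transitivity of $N$ on each horocycle $\mathfrak{H}(\omega^{+}_{0},m)$ (the cited Corollary~3.2 of Veca) together with the normalisation $\mu(N\cap K)=1$. For the decomposition, fix $g\in G$, put $x=g\cdot o$ and $j=h(x)$ where $h:=h_{\omega^{+}_{0}}$, so $x\in\mathfrak{H}(\omega^{+}_{0},j)$. Since $\sigma(\omega^{0}_{i})=\omega^{0}_{i+1}$ and $\omega^{0}_{0}=o$, one has $\sigma^{j}\cdot o=\omega^{0}_{j}$, and directly from \eqref{heightfunction} that $h(\omega^{0}_{j})=j$; hence $\sigma^{j}\cdot o$ lies on the same horocycle as $x$. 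By transitivity there is $n\in N$ with $n\sigma^{j}\cdot o=x=g\cdot o$, so $g^{-1}n\sigma^{j}$ fixes $o$ and therefore lies in $K$. Setting $k=(g^{-1}n\sigma^{j})^{-1}\in K$ yields $g=n\sigma^{j}k$.

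For the integration formula I would first reduce to right-$K$-invariant integrands. Put $F(g)=\int_{K}f(gk)\,dk$; then the inner $K$-integral on the right-hand side is exactly $F(n\sigma^{j})$, so the right-hand side equals $\int_{N}\sum_{j}F(n\sigma^{j})q^{-j}\,d\mu(n)$. Since $K$ is compact, the modular function of $G$ is trivial on $K$, so $\int_{G}f(gk)\,dg=\int_{G}f\,dg$ for all $k\in K$; integrating in $k$ gives $\int_{G}F\,dg=\int_{G}f\,dg$. Thus it suffices to prove, for right-$K$-invariant $F\in C_{c}(G)$ corresponding to a finitely supported $\phi$ on $\mathfrak{X}$ via $F(g)=\phi(g\cdot o)$, the identity $\int_{G}F\,dg=\int_{N}\sum_{j}F(n\sigma^{j})q^{-j}\,d\mu(n)$. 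With $dg$ normalised so that $\mu_{G}(K)=1$ (compatibly with $\int_{K}dk=1$ and $\mu(N\cap K)=1$), the left-hand side is simply $\sum_{x\in\mathfrak{X}}\phi(x)$, because $G/K\cong\mathfrak{X}$ is discrete and each coset has $dg$-measure $1$.

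The heart of the matter is the measure of the stabilisers $H_{j}:=\{n\in N:n\omega^{0}_{j}=\omega^{0}_{j}\}$. Because every element of $A$ fixes the end $[\omega^{+}_{0}]$ and $N$ consists of the end-fixing isometries that also fix a vertex (a conjugation-invariant condition), $A$ normalises $N$ and $H_{j}=\sigma^{j}(N\cap K)\sigma^{-j}$. Moreover $N$ preserves $h$: an end-fixing isometry $n$ satisfies $h(nx)=h(x)+c_{n}$, and fixing a vertex forces $c_{n}=0$; hence $n\mapsto n\omega^{0}_{j}$ maps $N$ onto the single horocycle $\mathfrak{H}(\omega^{+}_{0},j)$. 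Weil's quotient-integration formula then gives $\int_{N}F(n\sigma^{j})\,d\mu(n)=\mu(H_{j})\sum_{y\in\mathfrak{H}(\omega^{+}_{0},j)}\phi(y)$. I claim $\mu(H_{j})=q^{j}$. Conjugation by $\sigma$ rescales $\mu$ by a fixed modulus $\Delta$, since $B\mapsto\mu(\sigma B\sigma^{-1})$ is again a left Haar measure on the unimodular group $N$; and for $j=1$ one checks that $N\cap K\subsetneq\sigma(N\cap K)\sigma^{-1}=H_{1}$ with index exactly $q$. Indeed any element of $H_{1}$ fixing $o$ must fix the entire ray from $o$ to $[\omega^{+}_{0}]$, so $\mathrm{Stab}_{H_{1}}(o)=N\cap K$, while $H_{1}$ acts transitively on the $q$ forward-neighbours of $\omega^{0}_{1}$ (one of which is $o$); orbit--stabiliser gives the index $q$, whence $\Delta=\mu(H_{1})=q$ and $\mu(H_{j})=q^{j}$ for all $j\in\mathbb{Z}$. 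Substituting, the factors $q^{-j}$ and $q^{j}$ cancel, and by the horocyclic decomposition \eqref{horocycledecomposition} the right-hand side collapses to $\sum_{j}\sum_{y\in\mathfrak{H}(\omega^{+}_{0},j)}\phi(y)=\sum_{x\in\mathfrak{X}}\phi(x)$, matching the left-hand side.

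I expect the genuine obstacle to lie entirely in the third paragraph, namely in establishing $\mu(H_{j})=q^{j}$. This rests on two geometric inputs that must be verified with care: that $N$ preserves the height function (so that the orbit map lands in a single horocycle), and the index computation $[H_{1}:N\cap K]=q$, which itself relies on transitivity of $H_{1}$ on the $q$ forward-neighbours of $\omega^{0}_{1}$. Once $\mu(H_{j})=q^{j}$ is secured, the rest is a routine application of Weil's formula, the compactness of $K$, and the horocyclic decomposition, with the weight $q^{-j}$ appearing precisely as the reciprocal of the stabiliser volumes.
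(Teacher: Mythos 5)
Your proof is correct, but note that the paper does not prove this statement at all: it imports it verbatim as Veca's Theorem~3.5 from \cite{Alessandro_2002}, so there is no in-paper argument to compare against. Your derivation is a legitimate self-contained reconstruction from exactly the ingredients the paper itself cites, namely the transitivity of $N$ on each horocycle (Veca's Corollary~3.2), the normalisation $\mu(N\cap K)=1$, and the horocyclic decomposition \eqref{horocycledecomposition}. Two remarks. First, the step you single out as the genuine obstacle, $\mu(H_j)=q^j$, is in fact equivalent to the conjugation formula \eqref{modularfunction} that the paper already records from Veca's Lemma~3.8: applying \eqref{modularfunction} to the indicator of $N\cap K$ gives $\mu(\sigma^{-j}(N\cap K)\sigma^{j})=\mu(H_{-j})=q^{-j}$, i.e.\ $\mu(H_j)=q^j$ at once, so you could have bypassed the orbit--stabiliser computation entirely. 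Second, the one place where you assert rather than prove is the transitivity of $H_1$ on the $q$ neighbours of $\omega^0_1$ away from the end; this does hold, and the quickest argument is the one already implicit in your third paragraph: given two such neighbours $y,y'$, both lie on $\mathfrak{H}(\omega^+_0,0)$, so some $n\in N$ sends $y$ to $y'$; since $n$ preserves the height function and fixes the end, it maps the geodesic ray $y,\omega^0_1,\omega^0_2,\ldots$ to the ray $y',\omega^0_1,\omega^0_2,\ldots$, forcing $n\cdot\omega^0_1=\omega^0_1$, i.e.\ $n\in H_1$. With that line inserted (and the standard observation that $H_j$ is compact open in $N$, so Weil's formula applies fibrewise with finite coset measures), your argument is complete and the weights $q^{-j}$ cancel against $\mu(H_j)=q^j$ exactly as you say.
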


It is also known that the subgroup $A$ acts on $N$ by conjugation (see \cite[Lemma 3.8]{Alessandro_2002}). Moreover, for a compactly supported function $f$ on $N$, we have
\begin{equation}\label{modularfunction}
\int\limits_{N}f(\sigma^{j}n\sigma^{-j})~d\mu(n)=q^{-j}\int\limits_{N}f(n)~d\mu(n)
\end{equation}
We endow the set $NA$ with the binary operation induced by this conjugation and call it as the semi-direct product $NA$. By Theorem \ref{iwasawa}, we may identify $K$-right invariant functions on $G$ with functions on $NA$. In fact, the corresponding $L^{p}$-norms coincide. For the sake of simplicity, we denote the Haar measure $d\mu(n)$ by $dn$. 

\subsection{The spherical function and Fourier transform on \texorpdfstring{$\mathfrak{X}$}{X}}
 On the boundary $\Omega$, there exists a unique $K$-invariant, $G$-quasi-invariant probability measure $\nu$ and the Poisson kernel $p(g\cdot o,\omega)$ is defined to be the Radon-Nikodym derivative $d\nu(g^{-1}\cdot\omega)/d\nu(\omega)$. The Poisson kernel can be explicitly written as
$$p(x,\omega)=q^{h_{\omega}(x)},\quad\text{for all }x\in\mathfrak{X}\text{ and for all }\omega\in\Omega.$$
See \cite[Chapter 3, Section 2]{Talamanca_Picardello_1983} for details. Let $C(\Omega)$ be the space  of all continuous functions defined on the boundary $\Omega$. For $z\in \C$, we define the representations $\pi_z$ of $G$ on $C(\Omega)$ by the formula
$$\pi_z (g)\eta(\omega )= p^{1/2+ iz} (g\cdot o , \omega) \eta(g^{-1}\cdot\omega),\quad \text{for all }g\in G,\text{ for all }\omega\in \Omega.$$
It is clear that $ \pi_z=\pi_{z+\tau}$, where $\tau=2\pi/\log q$. We write $\mathbb{T}$ for the torus $\mathbb{R}/\tau\mathbb{Z}$, which we usually identify it with the interval $[-\tau/2,\tau/2)$. The elementary spherical function $\phi_z$ is now defined as
$$\phi_{z}(x)=\langle\pi_{z}(x)\mathbf{1},\mathbf{1}\rangle=\int\limits_{\Omega}p^{1/2+iz}(x,\omega)~d\nu(\omega),\quad\text{where }x\in\mathfrak{X}.$$
The following explicit formula of $\phi_{z}$ is well-known (see \cite[Theorem 2]{Talamanca_Alessandro_1982}):
\begin{equation}\label{eqsf}
	\phi_z(x)= \begin{cases}
		\vspace*{.2cm} \left(\frac{q-1}{q+1}|x|+1\right)q^{-|x|/2}&\forall z\in\ \tau\Z\\
		\vspace*{.2cm}\left(\frac{q-1}{q+1}|x|+1\right)q^{-|x|/2}(-1)^{|x|}&\forall z\in {\tau/2}+\tau\Z\\
		\mathbf{c}(z)q^{{(iz-1/2)}|x|}+\mathbf{c}(-z)q^{{(-iz-1/2)}|x|}&\forall z\in\C\setminus(\tau/2)\Z,
	\end{cases}
\end{equation}
where $\mathbf{c}$ is the meromorphic function given by
$$\mathbf{c}(z)=\frac{q^{1/2}}{q+1}\frac{q^{1/2+iz}-q^{-{1/2}-iz}}{q^{iz}-q^{-iz}}\quad\forall z\in \C \setminus (\tau/2)\Z.$$
We note that for every $x$ in $\mathfrak{X}$, the map $z\mapsto\phi_{z}(x)$ is an entire function. From the explicit formula above, it is also clear that $\phi_{z}$ is a radial function which satisfies $\phi_{z}=\phi_{-z}=\phi_{z+\tau}$ for every $z$ in $\mathbb{C}$.

The spherical transform $\widehat{f}$ of a finitely supported radial function $f$ on $\mathfrak{X} $ is defined by the formula
$$\widehat{f}(z)=\sum\limits_{x\in\mathfrak{X}}f(x)\phi_z(x),\quad \text{where } z\in\C.$$
One sees immediately that $z\mapsto\widehat{f}(z)$ is an entire function. Moreover, the symmetry property and the $\tau$-periodicity of $\phi_{z}$ implies that $\widehat{f}$ is even and $\tau$-periodic on $\mathbb{C}$. Following \cite{Meda_Stefano_2019}, we say that a holomorphic function $\eta$ defined on $S^{\circ}_{p}$ is Weyl-invariant if $\eta(z)=\eta(-z)$ and $\eta(z)=\eta(z+\tau)$ for all $z$ in $S^{\circ}_{p}$.

The Helgason-Fourier transform $\widetilde{f} $ of a finitely supported function $f$ on $\mathfrak{X}$ is a function on $\C\times\Omega$ defined by the formula
$$\widetilde{f} (z,\omega)=\sum\limits_{x\in\mathfrak{X}}f(x)p^{1/2+iz}(x,\omega).$$
It is clear that $\widetilde{f}(z,\omega)=\widetilde{f}(z+\tau,\omega)$ for every $z$ in $\mathbb{C}$. A simple computation reveals that if $f$ is radial, its Helgason Fourier transform $\widetilde{f}$ reduces to the spherical transform $\widehat{f}$. We conclude this section by stating the inversion formula for the Helgason-Fourier transform on $\mathfrak{X}$. For details, we refer to \cite[Chapter 3, Section IV and Chapter 5, Section IV]{Talamanca_Picardello_1983} and \cite[Chapter II, Section 6]{Alessandro_Nebbia}.

\begin{theorem}
If $f$ is a finitely supported function on $\mathfrak{X}$, then 
$$ f(x) = c_G\int\limits_{\T}\int\limits_{\Omega}p^{1/2-is}(x,\omega)  \widetilde{f} (s,\omega) |c(s)|^{-2} d\nu(\omega) ds,\quad\text{for all }x\in\mathfrak{X},$$
where $c_{G}=q\log q/4\pi(q+1)$.
\end{theorem}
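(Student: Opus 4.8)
The plan is to substitute the definition of the Helgason--Fourier transform into the right-hand side and reduce the identity to a reproducing-kernel computation. Denote by $Rf(x)$ the right-hand side of the asserted formula. Inserting $\widetilde f(s,\omega)=\sum_{y\in\mathfrak X}f(y)\,p^{1/2+is}(y,\omega)$ and interchanging the finite sum with the integrations (legitimate because $f$ has finite support) gives $Rf(x)=\sum_{y\in\mathfrak X}f(y)\,K(x,y)$, where
\[
K(x,y)=c_G\int_{\T}\int_{\Omega}p^{1/2-is}(x,\omega)\,p^{1/2+is}(y,\omega)\,|\mathbf c(s)|^{-2}\,d\nu(\omega)\,ds .
\]
Thus the theorem is equivalent to the statement that $K(x,y)=\delta_{x,y}$, the Kronecker delta on $\mathfrak X$, and everything comes down to evaluating $K$.

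The key step is to compute the inner integral over $\Omega$. I would write $x=g\cdot o$ and $y=h\cdot o$ and perform the change of variables $\omega\mapsto h\omega$ on $\Omega$. Since the Poisson kernel is by definition the Radon--Nikodym cocycle $p(g\cdot o,\omega)=d\nu(g^{-1}\omega)/d\nu(\omega)$, the chain rule yields both the multiplicative cocycle identity $p(g_1g_2\cdot o,\omega)=p(g_1\cdot o,\omega)\,p(g_2\cdot o,g_1^{-1}\omega)$ and the transformation rule $d\nu(h\omega)=p(h^{-1}\cdot o,\omega)\,d\nu(\omega)$. Applying these, one finds $p(h\cdot o,h\omega)=p(h^{-1}\cdot o,\omega)^{-1}$ and $p(g\cdot o,h\omega)=p(h^{-1}g\cdot o,\omega)\,p(h^{-1}\cdot o,\omega)^{-1}$; substituting and collecting powers, the exponents of $p(h^{-1}\cdot o,\omega)$ cancel exactly because $-(1/2-is)-(1/2+is)+1=0$, leaving
\[
\int_{\Omega}p^{1/2-is}(x,\omega)\,p^{1/2+is}(y,\omega)\,d\nu(\omega)=\int_{\Omega}p^{1/2-is}(h^{-1}g\cdot o,\omega)\,d\nu(\omega)=\phi_{-s}(h^{-1}g\cdot o).
\]
Because $\phi_{-s}=\phi_s$ is radial and $|h^{-1}g\cdot o|=d(x,y)$, this inner integral depends only on $d(x,y)$; this cancellation is precisely where the $G$-quasi-invariance of $\nu$ and the cocycle structure of the Poisson kernel do all the work.

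Feeding this back, $K(x,y)=c_G\int_{\T}\phi_s(h^{-1}g\cdot o)\,|\mathbf c(s)|^{-2}\,ds$, which is exactly the scalar spherical (Plancherel) inversion formula applied to the radial delta function $\delta_o$: indeed $\widehat{\delta_o}(s)=\phi_s(o)=1$, so radial inversion forces $c_G\int_{\T}\phi_s(w)\,|\mathbf c(s)|^{-2}\,ds=\delta_{w,o}$, whence $K(x,y)=\delta_{x,y}$ and $Rf=f$. The convergence is harmless: $\Omega$ carries a probability measure, the Poisson factors are bounded for real $s$, and $|\mathbf c(s)|^{-2}$ is continuous on the compact torus $\T$ (it vanishes at the poles $s\in(\tau/2)\Z$ of $\mathbf c$). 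I therefore expect the genuine obstacle to lie not in this reduction but in the underlying scalar inversion theorem itself, namely the identification of $c_G\,|\mathbf c(s)|^{-2}$ as the correct Plancherel density for the spherical transform. One either imports this from the cited references or proves it directly by computing the spherical transform of $\delta_o$ and inverting it against the explicit formula \eqref{eqsf} for $\phi_s$ together with the explicit expression for $\mathbf c$, which on the tree reduces to an elementary Fourier-series computation on $\T$.
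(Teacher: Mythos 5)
Your argument is correct, and it is essentially the standard proof: the paper itself does not prove this theorem but cites Fig\`a-Talamanca--Picardello and Fig\`a-Talamanca--Nebbia, where the inversion formula is obtained by exactly your reduction --- the cocycle/quasi-invariance computation giving $\int_{\Omega}p^{1/2-is}(x,\omega)\,p^{1/2+is}(y,\omega)\,d\nu(\omega)=\phi_{s}(h^{-1}g)$ (the identity the paper itself quotes from \cite[Page 55]{Alessandro_Nebbia} when deriving the kernel $K(h,g)$ in Section \ref{PDO_on_tree_1<p<2}), followed by the spherical Plancherel theorem, which identifies $c_{G}|\mathbf{c}(s)|^{-2}\,ds$ as the Plancherel density. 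You correctly isolate that scalar inversion theorem as the one ingredient that must be imported or verified separately; with it granted, your computation that $K(x,y)=\delta_{x,y}$ is complete and correct.
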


%%%%%%%%%%%%%%%%%%%%%%%%%%%%%%%%%%%%%%%%%%%%%%%%%%%%%%%%%%%%%%%%%%%%%%%%%%%%%%%%%%%%%%%%%%%%%%%%%%%%%%%%%%%%%%%%
\section{Analysis of pseudo-differential operators, \texorpdfstring{$1<p<2$}{1p2}}\label{PDO_on_tree_1<p<2}
In this section we prove our result concerning the $L^{p}$-boundedness of pseudo-differential operators on $\mathfrak{X}$, for $1<p<2$. We begin with a couple of preparatory lemmas. Both these lemmas are essentially known (see \cite[Lemma 2.2 $\&$ 2.3]{Meda_Stefano_2019} for details). However, it seems that there is a minor error in the statement of \cite[Lemma 2.2]{Meda_Stefano_2019}, which we have taken care of and provided a proof for the sake of completeness.

\begin{lemma}[{{\cite[Lemma 2.2]{Meda_Stefano_2019}}}]\label{nproperties}
Let $N$ be as in \eqref{groupn}. Then the following assertions are true.
\begin{enumerate}
\item For every $n\in N$, $d(o,n\cdot o)$ is either zero or a positive even number.
\item For every $n\in N$ and every $j\in\mathbb{Z}$ satisfying $j\leq d(o,n\cdot o)/2$,
		$$d(o,n\sigma^{j}\cdot o)=d(o,n\cdot o)-j.$$
\end{enumerate}
\end{lemma}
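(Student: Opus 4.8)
The plan is to recast everything in terms of the height function $h:=h_{\omega^{+}_{0}}$ attached to the end $[\omega^{+}_{0}]$ and to evaluate every distance through the \emph{confluence point} of two geodesic rays aimed at $[\omega^{+}_{0}]$. Throughout, for a vertex $x$ I write $[x,\omega^{+}_{0})$ for the geodesic ray issuing from $x$ in the direction of the end $[\omega^{+}_{0}]$. Since $\omega^{+}_{0}$ starts at $o$ we have $h(o)=0$ and, because $\sigma^{k}\cdot o=\omega^{0}_{k}$, also $h(\sigma^{k}\cdot o)=h(\omega^{0}_{k})=k$ for every $k\in\Z$. The key preliminary fact is that each $n\in N$ preserves every horocycle, i.e. $h(n\cdot x)=h(x)$ for all $x$. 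This follows from the definition \eqref{groupn}: as $n$ fixes the end $[\omega^{+}_{0}]$, the rays $\omega^{+}_{0}$ and $n\cdot\omega^{+}_{0}$ are equivalent, and a direct computation from \eqref{heightfunction} gives $h(n\cdot x)=h(x)+c_{n}$ with $c_{n}\in\Z$ independent of $x$; evaluating at a vertex fixed by $n$ forces $c_{n}=0$. (This is consistent with Veca's transitivity of $N$ on each horocycle $\mathfrak{H}(\omega^{+}_{0},m)$.)

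Next I would isolate the single geometric identity that drives the proof. For a vertex $w$, let $b(w)$ be the confluence point of $[o,\omega^{+}_{0})$ and $[w,\omega^{+}_{0})$, i.e. the first vertex at which these two rays meet; in a tree $b(w)$ is the median of $o$, $w$ and the end, and in particular lies on the geodesic from $o$ to $w$. Since $b(w)$ lies on $[o,\omega^{+}_{0})$ we have $d(o,b(w))=h(b(w))$, and since it lies on $[w,\omega^{+}_{0})$, where the height increases towards the end, $d(w,b(w))=h(b(w))-h(w)$. Adding these and using $h(o)=0$ yields
$$ d(o,w)=d(o,b(w))+d(b(w),w)=2\,h(b(w))-h(w). $$

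Part (1) is then immediate: applying the identity to $w=n\cdot o$ and using $h(n\cdot o)=0$ gives $d(o,n\cdot o)=2\,h(b(n\cdot o))$, a nonnegative even integer that vanishes exactly when $n\cdot o=o$. For part (2), set $M:=d(o,n\cdot o)/2$, so that the confluence point $b(n\cdot o)$ is the height-$M$ vertex $\omega^{0}_{M}$ of $[o,\omega^{+}_{0})$. I apply the identity to $w=n\sigma^{j}\cdot o=n\cdot\omega^{0}_{j}$, for which $h(w)=h(\omega^{0}_{j})=j$. The proof of (2) thus reduces to the single claim that $b(w)=\omega^{0}_{M}$ again, for then the identity gives $d(o,n\sigma^{j}\cdot o)=2M-j=d(o,n\cdot o)-j$.

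The main obstacle, and the only point where the hypothesis $j\le d(o,n\cdot o)/2=M$ enters, is exactly this identification $b(w)=\omega^{0}_{M}$. Parametrising the image geodesic $n\cdot\omega_{0}$ by height (its height-$t$ vertex is $n\cdot\omega^{0}_{t}$, by the preliminary step), its ray towards the end is $[n\cdot o,\omega^{+}_{0})=\{n\cdot\omega^{0}_{t}:t\ge 0\}$, and this ray meets $[o,\omega^{+}_{0})$ precisely at $\omega^{0}_{M}=n\cdot\omega^{0}_{M}$, coinciding with it above that vertex. Now $[w,\omega^{+}_{0})=\{n\cdot\omega^{0}_{t}:t\ge j\}$; the condition $j\le M$ guarantees that this ray still contains $\omega^{0}_{M}$, above which it agrees with $[o,\omega^{+}_{0})$, while below $\omega^{0}_{M}$ it runs along the $n\cdot o$-branch and hence separates from the $o$-branch of $[o,\omega^{+}_{0})$. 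Therefore the first common vertex is $\omega^{0}_{M}$, i.e. $b(w)=\omega^{0}_{M}$. I would record separately the degenerate cases $j\le 0$ (where $\omega^{0}_{j}$ lies on the side of $o$ opposite the end) and $M=0$ (where $n\cdot o=o$): in each the same ray bookkeeping places $b(w)$ at height $M$, and one sees concretely that dropping the hypothesis $j\le M$ would let $w$ rise above the confluence and replace the right-hand side by $j$, breaking the formula.
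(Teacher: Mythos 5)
Your proposal is correct, and it is essentially the paper's argument in different clothing: your confluence point $b(n\cdot o)=\omega^{0}_{M}$ is exactly the first vertex $\omega^{0}_{k}$ of the axis fixed by $n$ in the paper's stratification $N=\bigcup_{k}N_{\omega^{0}_{k}}$, and both proofs compute $d(o,n\sigma^{j}\cdot o)$ by concatenating geodesic segments through that fixed vertex. The height-function bookkeeping ($h(n\cdot x)=h(x)$ and $d(o,w)=2h(b(w))-h(w)$) streamlines the case analysis, but the geometric content is the same.
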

\begin{proof}
Before going into the details of the proof, let us first observe that the subgroup $N$ can also be written as
$$N=\bigcup\limits_{k\in\mathbb{Z}_{+}}N_{\omega^{0}_{k}},$$
where, for every $k$, $N_{\omega^{0}_{k}}=\{n\in G_{\omega_{0}}:n\cdot\omega^{0}_{k}=\omega^{0}_{k}\}$. Indeed, if $n$ is in $N$, then $n\cdot x=x$ for some $x$ in $\mathfrak{X}$. Since $n$ also fixes the equivalence class $[\omega^{+}_{0}]$, $n$ must fix every vertex of the infinite geodesic ray $\omega^{+}_{x}$ starting from $x$, and in the direction of $\omega^{+}_{0}$. By definition, the geodesic rays $\omega^{+}_{x}$ and $\omega^{+}_{0}$ are equivalent, and consequently there exists some $k_{0}$ in $\mathbb{Z}_{+}$ such that for all $k\geq k_{0}$, $n\cdot\omega^{0}_{k}=\omega^{0}_{k}$. Hence $n$ is in $N_{\omega^{0}_{k}}$ for all $k\geq k_{0}$. By applying a similar reasoning, one sees immediately that $N_{\omega^{0}_{k}}\subset N_{\omega^{0}_{k+1}}$ for all $k$. This further implies that $N$ can be written as the disjoint union
\begin{equation}\label{ndisjoint}
N=N_{\omega^{0}_{0}}\bigcup\limits_{k\in\mathbb{N}}\left(N_{\omega^{0}_{k}}\setminus N_{\omega^{0}_{k-1}}~\right).
\end{equation}

\noindent{\bf Proof of (1).} If $n$ is in $N$, by using (\ref{ndisjoint}) it follows that $n\in N_{\omega^{0}_{0}}$ or $n\in N_{\omega^{0}_{k}}\setminus N_{\omega^{0}_{k-1}}$ for some $k\in\mathbb{N}$. In the first case, $d(n\cdot o,o)=0$, whereas if $n\in N_{\omega^{0}_{k}}\setminus N_{\omega^{0}_{k-1}}$, then by using the facts $n\cdot \omega^{0}_{k}=\omega^{0}_{k}$ and $n\cdot \omega^{0}_{l}\neq\omega^{0}_{l}$ for all $0\leq l\leq k-1$, we get
$$d(n\cdot o,o)=d(n\cdot o,n\cdot \omega^{0}_{k})+d(n\cdot \omega^{0}_{k},o)=2~d(o,\omega^{0}_{k})=2k.$$

\noindent{\bf Proof of (2).} If $n\in N_{\omega^{0}_{0}}$, $d(o,n\cdot o)=0$ and hence for all $j\leq 0$,
$$d(o,n\sigma^{j}\cdot o)=d(n^{-1}\cdot o,\sigma^{j}\cdot o)=d(o,n\cdot o)-j.$$
Now suppose that $n\in N_{\omega^{0}_{k}}\setminus N_{\omega^{0}_{k-1}}$ for some $k\in\mathbb{N}$. Then $d(o,n\cdot o)=2k$ and consequently, for all $j\leq k$, we get
\begin{align*}
d(o,n\sigma^{j}\cdot o)&=d(o,n\sigma^{k}\cdot o)+d(n\sigma^{k}\cdot o,n\sigma^{j}\cdot o)\\
&=d(o,n\cdot \omega^{0}_{k})+d(n\cdot \omega^{0}_{k},n\cdot \omega^{0}_{j})\\
&=d(o,\omega^{0}_{k})+d(\omega^{0}_{k},\omega^{0}_{j})\\
&=2k-j,
\end{align*}
which establishes our claim.
\end{proof}

\begin{remark}
As mentioned before, it seems that Lemma \ref{nproperties} will not hold for all $n\in N$ and for all $j>d(o,n\cdot o)/2$. Here we give a counterexample.  We choose $n\in N_{\omega^{0}_{k}}\setminus N_{\omega^{0}_{k-1}}$ for some $k$ positive. Then from the preceding arguments, we have $d(o,n\cdot o)=2k$ and also $n\in N_{\omega^{0}_{l}}$ for all $l\geq k$. Consequently, for all $j>k$, $d(o,n\sigma^{j}\cdot o)=j>2k-j=d(o,n\cdot o)-j$.
\end{remark}

\begin{lemma}[{{\cite[Lemma 2.3]{Meda_Stefano_2019}}}]\label{qpn}
For $p\in[1,2)$, define $Q_{p}:N\rightarrow\mathbb{R}$ by $Q_{p}(n)=q^{-|n\cdot o|/p}$. Then the function $n\mapsto |n\cdot o|^{l}\,Q_{p}(n)$ belongs to $L^{1}(N)$ for each non-negative integer $l$.
\end{lemma}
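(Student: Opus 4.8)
The plan is to turn the integral over $N$ into a convergent numerical series, using the stratification of $N$ from the proof of Lemma~\ref{nproperties} together with the modularity relation \eqref{modularfunction} to evaluate the relevant Haar measures. First I would observe that, by Lemma~\ref{nproperties}(1), the function $n\mapsto|n\cdot o|$ is constant on each stratum of the disjoint decomposition \eqref{ndisjoint}: one has $|n\cdot o|=0$ on $N_{\omega^{0}_{0}}$ and $|n\cdot o|=2k$ on $N_{\omega^{0}_{k}}\setminus N_{\omega^{0}_{k-1}}$. Writing $N_{\omega^{0}_{-1}}=\emptyset$, this collapses the integral into
$$\int_{N}|n\cdot o|^{l}\,Q_{p}(n)\,d\mu(n)=\sum_{k=0}^{\infty}(2k)^{l}\,q^{-2k/p}\,\mu\!\left(N_{\omega^{0}_{k}}\setminus N_{\omega^{0}_{k-1}}\right),$$
so that everything reduces to computing the measures $\mu(N_{\omega^{0}_{k}})$.

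The key (and main) step is this measure computation. I would first verify that conjugation by $\sigma$ shifts the filtration by one level, that is, $\sigma N_{\omega^{0}_{k}}\sigma^{-1}=N_{\omega^{0}_{k+1}}$; this follows directly from $\sigma\cdot\omega^{0}_{j}=\omega^{0}_{j+1}$ and the defining fixed-point property of $N_{\omega^{0}_{k}}$ (and conjugation preserves $N$, since $A$ acts on $N$ by conjugation). Applying \eqref{modularfunction} with $j=1$ to the indicator function of $N_{\omega^{0}_{k+1}}$ then gives the recursion $\mu(N_{\omega^{0}_{k+1}})=q\,\mu(N_{\omega^{0}_{k}})$. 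Since $N_{\omega^{0}_{0}}=N\cap K$, the normalization $\mu(N\cap K)=1$ yields $\mu(N_{\omega^{0}_{k}})=q^{k}$, and hence $\mu(N_{\omega^{0}_{k}}\setminus N_{\omega^{0}_{k-1}})=q^{k-1}(q-1)$ for $k\geq1$.

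Finally I would substitute these values and collect exponentials: up to the constant factor $2^{l}q^{-1}(q-1)$ the series becomes $\sum_{k\geq1}k^{l}q^{k(1-2/p)}$. Since $p\in[1,2)$ forces $1-2/p<0$ and $q\geq2$, the ratio $r=q^{1-2/p}$ lies in $(0,1)$, so this is a polynomial times a convergent geometric series and the sum is finite (the $k=0$ term contributes a harmless constant, namely $1$ when $l=0$ and $0$ otherwise). The substance of the argument is thus the measure identity $\mu(N_{\omega^{0}_{k}})=q^{k}$; once that is in hand, convergence for $p<2$---and the breakdown at $p=2$, where $r=1$---is immediate.
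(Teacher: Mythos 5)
Your proof is correct. The paper itself gives no argument for this lemma---it is quoted from \cite[Lemma 2.3]{Meda_Stefano_2019} without proof---but your write-up is a complete, self-contained derivation using only facts already established here: the stratification \eqref{ndisjoint} with $|n\cdot o|=2k$ on $N_{\omega^{0}_{k}}\setminus N_{\omega^{0}_{k-1}}$ (from the proof of Lemma \ref{nproperties}), the identity $\sigma N_{\omega^{0}_{k}}\sigma^{-1}=N_{\omega^{0}_{k+1}}$, and \eqref{modularfunction} applied to the indicator of the compact open subgroup $N_{\omega^{0}_{k+1}}$, yielding $\mu(N_{\omega^{0}_{k}})=q^{k}$ and hence a series dominated by $\sum_{k}k^{l}q^{k(1-2/p)}$, which converges precisely because $p<2$. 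This is essentially the standard argument for the cited result, and your remark that it breaks down at $p=2$ correctly identifies why the restriction $p\in[1,2)$ is needed.
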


\subsection{Decomposition of the operator \texorpdfstring{$T_\Psi$}{tpsi}}
We now express $T_\Psi$ as an integral operator on $G$ and thereafter, we shall decompose its kernel into two parts. Let $f$ be a finitely supported function on $\mathfrak{X}$. Then (\ref{pdotree}) gives us
$$T_{\Psi}f(h)=c_{G}\int\limits_{\mathbb{T}}\int\limits_{\Omega}\Psi(h,s)\widetilde{f}(s,\omega)p^{1/2-is}(h,\omega)|\mathbf{c}(s)|^{-2}~d\nu(\omega)~ds,\quad h\in G.$$
Substituting the expression of $\widetilde{f}(s,\omega)$, applying Fubini's Theorem and using the expression of $\phi_{s}(g^{-1}h)$ from \cite[Page 55]{Alessandro_Nebbia}, we obtain
\begin{align*}
	T_{\Psi}f(h)&=c_{G}\int\limits_{G}f(g)\int\limits_{\mathbb{T}}\Psi(h,s)|\mathbf{c}(s)|^{-2}\left(\int\limits_{\Omega}p^{1/2-is}(h,\omega)p^{1/2+is}(g,\omega)~d\nu(\omega)\right)ds~dg\\
	&=c_{G}\int\limits_{G}f(g)\left(\int\limits_{\mathbb{T}}\Psi(h,s)\phi_{s}(g^{-1}h)|\mathbf{c}(s)|^{-2}~ds\right)dg\\
	&=\int\limits_{G}f(g)K(h,g)~dg,
\end{align*}
where
\begin{equation}\label{pdokernel}
	K(h,g)=c_{G}\int\limits_{\mathbb{T}}\Psi(h,s)\phi_{s}(g^{-1}h)|\mathbf{c}(s)|^{-2}~ds.
\end{equation}
Using Theorem \ref{iwasawa} with $g=n\sigma^{j}k_{1}$ and $h=m\sigma^{l}k_{2}$, we can write
\begin{equation}\label{eqn_1st_defn_T_Psi}
	\begin{aligned}
T_{\Psi}f(m\sigma^{l})&=\int\limits_{N}\sum\limits_{j\in\mathbb{Z}}f(n\sigma^{j})K(m\sigma^{l},n\sigma^{j})q^{-j}~dn\\
&=T_{\Psi}^{+}f(m\sigma^{l})+T_{\Psi}^{-}f(m\sigma^{l})\quad\text{(say)},
\end{aligned}
\end{equation}
where the operators $T_{\Psi}^{\pm}f$ are defined by the formulae
\begin{equation}\label{tpsipm}
	T_{\Psi}^{\pm}f(m\sigma^{l})=\int\limits_{N}\sum\limits_{j\in\mathbb{Z}}f(n\sigma^{j})K(m\sigma^{l},n\sigma^{j})\chi_{\pm}(l-j)q^{-j}~dn,
\end{equation}
and $\chi_{\pm}$ are functions on $\mathbb{Z}$ given by
$$\chi_{+}(l-j)=\chi_{[0,\infty)}(l-j)\quad\text{and}\quad\chi_{-}(l-j)=\chi_{(-\infty,-1]}(l-j).$$
Consequently, the $L^{p}$-boundedness of $T_{\Psi}$ follows from that of the operators $T_{\Psi}^{\pm}$, which we shall take up separately in the following theorems.

In the case of multipliers on $\mathfrak{X}$, Meda et al. \cite{Meda_Stefano_2019} decomposed $T_m$ (defined by \eqref{multiplierx}) into the operators $T_{m}^{\pm}$, which are of convolution type with $K$-biinvariant functions $ K_m^{\pm}$ (say). In order to get the boundedness of $T_{m}^{-}$, they first proved a general transference result for convolution operators (see \cite[Theorem 3.3]{Meda_Stefano_2019}) and then used the estimate of the kernel $K_{m}^{-}$. On the other hand, the boundedness of $T_{m}^{+}$ was an outcome of a basic convolution type inequality given in \cite[Corollary 20.14 (ii) $\&$ (iv)]{MR551496}. However, in the case of pseudo-differential operators, we don't have the privilege to use the above methods directly. This is the crucial difference between the multiplier case and our situation. We prove the boundedness of $T_{\Psi}^{-}$ by establishing a connection with pseudo-differential operators on $\mathbb{Z}$. In the case of $T_{\Psi}^{+}$, we shall broadly follow the approach of Ionescu \cite{Ionescu_2002} for noncompact symmetric spaces.

\begin{theorem}\label{tpsiminus}
Let $1<p<2$. Suppose that $\Psi$ satisfies the hypothesis of Theorem \ref{mainresult1} and $T_{\Psi}^{-}$ be as in \eqref{tpsipm}. Then $T_{\Psi}^{-}$ is bounded from $L^{p}(NA)$ to itself. Moreover, there exists a constant $C_{p}>0$ such that
$$\|T_{\Psi}^{-}f\|_{L^{p}(NA)}\leq C_{p}\left(\|\Psi\|_{L^{\infty}(\mathfrak{X}\times S^{\circ}_{p})}+\sup\limits_{m\in N}\vertiii{\Psi_{-\delta_{p}}(m\cdot,\cdot)}_{p}\right) \|f\|_{L^{p}(NA)},$$
for all $f\in L^{p}(NA)$.
\end{theorem}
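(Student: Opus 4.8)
The plan is to work entirely on $NA$, using the isometric identification of $L^p(\mathfrak{X})$ with $L^p(NA)$, and to convert the horocyclic part of the kernel into a pseudo-differential operator on $\mathbb{Z}$ acting in the $A$-direction, while the $N$-direction is absorbed into an integral operator with an $L^1(N)$ kernel.

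\emph{Preparing the kernel.} First I would insert the explicit formula \eqref{eqsf} for $\phi_s$ into \eqref{pdokernel}. For real $s$ one has $|\mathbf{c}(s)|^{-2}=\mathbf{c}(s)^{-1}\mathbf{c}(-s)^{-1}$ and $\overline{\mathbf{c}(s)}=\mathbf{c}(-s)$, so $\phi_s(x)|\mathbf{c}(s)|^{-2}$ splits into two terms; since $z\mapsto\Psi(m\sigma^l,z)$ is even (Weyl-invariance) the substitution $s\mapsto -s$ collapses them, giving the single-term kernel
$$K(m\sigma^l,n\sigma^j)=2c_G\int_{\mathbb{T}}\Psi(m\sigma^l,s)\,\frac{q^{(-is-1/2)|x|}}{\mathbf{c}(s)}\,ds,\qquad |x|=d(n\sigma^j\cdot o,\,m\sigma^l\cdot o).$$
Next I would compute $|x|$ in the range $l-j\le -1$ singled out by $\chi_-$. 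Writing $(n\sigma^j)^{-1}m\sigma^l=n''\sigma^{l-j}$ with $n''=\sigma^{-j}n^{-1}m\sigma^j\in N$ and using that $l-j<0\le d(o,n''\cdot o)/2$, Lemma \ref{nproperties}(2) yields the key identity $|x|=|n''\cdot o|+(j-l)$, which cleanly separates the $N$-contribution $|n''\cdot o|$ from the $A$-contribution $j-l$.

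\emph{The contour shift.} The heart of the argument is to move the $s$-integration from $\mathbb{T}$ to the line $\Im s=-\delta_p$. This is legitimate because $z\mapsto\Psi(m\sigma^l,z)$ is holomorphic on $S^{\circ}_{p}$ and continuous on $S_p$ (hypothesis (1)), $z\mapsto q^{(-iz-1/2)|x|}$ is entire, and $1/\mathbf{c}$ is holomorphic on the open strip $\{|\Im z|<1/2\}$ — the zeros of $\mathbf{c}$ lie on $\{\Im z=\pm 1/2\}$, hence outside a neighbourhood of the closed strip $\{-\delta_p\le\Im z\le 0\}$ since $\delta_p<1/2$ for $p>1$. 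After the shift the symbol becomes $\Psi_{-\delta_p}(m\sigma^l,s)=\Psi(m\sigma^l,s-i\delta_p)$ and the exponential acquires the decay $q^{-\delta_p|x|}$; together with the intrinsic $q^{-|x|/2}$ this gives the total weight $q^{-(1/2+\delta_p)|x|}=q^{-|x|/p}$, using $\delta_p=1/p-1/2$. Splitting $|x|=|n''\cdot o|+(j-l)$ then produces the factor $q^{-|n''\cdot o|/p}=Q_p(n'')$, the quantity controlled by Lemma \ref{qpn}.

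\emph{Recognising a $\mathbb{Z}$-pseudo-differential operator and closing the estimate.} After the shift the character $q^{-is|x|}=q^{is(l-j-|n''\cdot o|)}$ is exactly of the form $q^{is(l-d)}$ appearing in the kernel of a pseudo-differential operator on $\mathbb{Z}$ with symbol $\psi_m(l,s)=\Psi_{-\delta_p}(m\sigma^l,s)\,\mathbf{c}(s-i\delta_p)^{-1}$. The factor $\mathbf{c}(\cdot-i\delta_p)^{-1}$ is smooth on the compact torus $\mathbb{T}$, hence a bounded Fourier multiplier on $L^p(\mathbb{Z})$ by \eqref{cvonz}; since $T_{\psi_m}=T_{\Psi_{-\delta_p}(m\cdot,\cdot)}\circ T_{\mathbf{c}(\cdot-i\delta_p)^{-1}}$, hypothesis (3) gives $\vertiii{\psi_m(\cdot,\cdot)}_{p}\lesssim \sup_{n\in N}\vertiii{\Psi_{-\delta_p}(n\cdot,\cdot)}_{p}$ uniformly in $m$. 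I would then perform the $N$-change of variables $n\mapsto n''$, whose Jacobian $q^{j}$ coming from \eqref{modularfunction} cancels the Iwasawa weight $q^{-j}$ in \eqref{tpsipm}, so as to display $T_{\Psi}^{-}$ as a superposition over $N$ of these $\mathbb{Z}$-operators against the weight $Q_p$. Minkowski's integral inequality in the $N$-variable, the uniform $\mathbb{Z}$-bound above, and the integrability $\int_N |n\cdot o|^{\ell}Q_p(n)\,dn<\infty$ from Lemma \ref{qpn} then combine to yield the asserted estimate, with the constant taking the stated form.

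\emph{Main obstacle.} The delicate point is the last step: decoupling the $N$-direction from the $A$-direction so that the height variables $(l,j)$ genuinely assemble into a single $\mathbb{Z}$-pseudo-differential operator to which hypothesis (3) applies, while the residual $N$-dependence (through $|n''\cdot o|$, the shift in the kernel argument, and the one-sided cutoff $j>l$) collapses to an $L^1(N)$ weight dominated by $Q_p$. Keeping precise track of how the modular factor of \eqref{modularfunction}, the Iwasawa weight $q^{-j}$, and the decay $q^{-|x|/p}$ conspire — so that Minkowski's inequality leaves behind exactly a translation-invariant, and hence $L^p(\mathbb{Z})$-bounded, operator in the height variable — is where the real work lies.
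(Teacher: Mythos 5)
Your overall strategy is the same as the paper's (explicit spherical function, collapse by Weyl-invariance, contour shift to the edge of $S_p$ producing the weight $q^{-|x|/p}$, the identity $|x|=|n''\cdot o|+(j-l)$ from Lemma \ref{nproperties}(2), and Minkowski's inequality in the $N$-variable against the $L^1(N)$ weight from Lemma \ref{qpn}). But there is a genuine gap exactly at the point you flag as ``where the real work lies'': you never explain how to bound, uniformly in $m$ and with controlled dependence on $n$, the operator on $\mathbb{Z}$ that actually appears. That operator is not $T_{\psi_m}$; it is $T_{\psi_m}$ with its kernel evaluated at $l-j-|n''\cdot o|$ and then truncated to the half-line $l-j\le -1$. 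Hypothesis (3) gives the $L^p(\mathbb{Z})$-boundedness only of the \emph{full, untruncated} pseudo-differential operator, and a one-sided truncation of a general $L^p$-bounded pseudo-differential operator is not automatically bounded with a uniform constant (this is not a convolution operator, so one cannot invoke the Riesz projection). Your claim that the cutoff ``collapses to an $L^1(N)$ weight dominated by $Q_p$'' is precisely the assertion that needs proof.

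The paper resolves this by writing the truncated, shifted operator as $\mathcal{I}_+-\mathcal{I}_--\mathcal{I}_n$: the full operator $\mathcal{I}_+$ (bounded by hypothesis (3) together with Young's inequality for the smooth factor $\mathbf{c}_{-\delta_p}(\cdot)^{-1}$, as in your composition argument); the complementary tail $\mathcal{I}_-$ supported on $l-j\le -1$, which is controlled by a \emph{second} contour shift to the opposite edge $\Im z=-\delta_p$ of the strip, yielding the pointwise kernel decay $|\kappa_m(-l,l-j)|\lesssim q^{2\delta_p(l-j)}\|\Psi\|_{L^\infty}$ and hence a Schur/Young bound using only the $L^\infty$ hypothesis; and a finite band $\mathcal{I}_n$ of width $1+|n^{-1}\cdot o|$, bounded trivially. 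This is why the final constant involves \emph{both} $\|\Psi\|_{L^\infty(\mathfrak{X}\times S_p)}$ and $\sup_m\vertiii{\Psi_{-\delta_p}(m\cdot,\cdot)}_p$, and why the $N$-integration requires $\int_N(1+|n\cdot o|)\,Q_p(n)\,dn<\infty$ (Lemma \ref{qpn} with $l=1$, not just $l=0$) rather than merely $Q_p\in L^1(N)$ as your sketch suggests. Without the three-term decomposition and the second contour shift, the argument does not close.
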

\begin{proof}
We will prove this theorem in the following steps:

\noindent \textbf{STEP I : Analysis of the Kernel.} We recall from (\ref{pdokernel}) that
$$K(h,g)=c_{G}\int\limits_{\mathbb{T}}\Psi(h,s)\phi_{s}(g^{-1}h)|\mathbf{c}(s)|^{-2}~ds.$$
Putting the explicit expression of $\phi_{s}$ from (\ref{eqsf}) and using the fact that $|\mathbf{c}(s)|^{2}=\mathbf{c}(s)\mathbf{c}(-s)$ for all $s\in\T$, we obtain
$$K(h,g)=c_{G}\int\limits_{\mathbb{T}}\Psi(h,s)q^{(is-1/2)|g^{-1}h\cdot o|}\mathbf{c}(-s)^{-1}~ds+c_{G}\int\limits_{\mathbb{T}}\Psi(h,s)q^{(-is-1/2)|g^{-1}h\cdot o|}\mathbf{c}(s)^{-1}~ds.$$
After a change of variable $s\mapsto -s$ in the second integral and using the Weyl-invariance of the function $\Psi(h,\cdot)$, we get
\begin{equation}\label{eqn_expression_of_K_1st}
	K(h,g)=C\int\limits_{\mathbb{T}}\Psi(h,s)q^{(is-1/2)|g^{-1}h\cdot o|}\mathbf{c}(-s)^{-1}~ds.
\end{equation}
We note that the above integrand is holomorphic function on $S^{\circ}_{p}$. Therefore applying the Cauchy's integral theorem on the closed rectangle
\begin{multline*}
		\Gamma(z)=\{z\in\mathbb{C}:\Im z=0,-\tau/2\leq \Re z\leq\tau/2\}\cup\{z\in\mathbb{C}: \Re z=\tau/2,0\leq \Im z\leq\delta_{r}\}\\
		\cup\{z\in\mathbb{C}:\Im z=\delta_{r},\tau/2\leq \Re z\leq-\tau/2\}\cup\{z\in\mathbb{C}:\Re z=-\tau/2,\delta_{r}\leq \Im z\leq 0\},
\end{multline*}
	it follows that for every $r\in(p,2)$,
	$$K(h,g)=C\int\limits_{\mathbb{T}}\Psi(h,s+i\delta_{r})q^{(is-1/r)|g^{-1}h\cdot o|}\mathbf{c}(-s-i\delta_{r})^{-1}~ds.$$
Using the dominated convergence theorem and letting $r\rightarrow p$, we finally get
	\begin{align*}
		K(h,g)&=C\int\limits_{\mathbb{T}}\Psi(h,s+i\delta_{p})q^{(is-1/p)|g^{-1}h\cdot o|}\mathbf{c}(-s-i\delta_{p})^{-1}~ds\\
		&=C~q^{-|g^{-1}h\cdot o|/p}\int\limits_{\mathbb{T}}\Psi_{\delta_{p}}(h,s)q^{is|g^{-1}h\cdot o|}\mathbf{c}_{-\delta_{p}}(-s)^{-1}~ds.
	\end{align*}
Plugging in the above expression in (\ref{tpsipm}), we obtain
\begin{multline*}
T_{\Psi}^{-}f(m\sigma^{l})=C\int\limits_{N}\sum\limits_{j\in\mathbb{Z}}f(n\sigma^{j})q^{-|\sigma^{-j}n^{-1}m\sigma^{l}\cdot o|/p}\\
\cdot\left(\int\limits_{\mathbb{T}}\Psi_{\delta_{p}}(m\sigma^{l},s)q^{is|\sigma^{-j}n^{-1}m\sigma^{l}\cdot o|}\mathbf{c}_{-\delta_{p}}(-s)^{-1}~ds\right)\chi_{-}(l-j)q^{-j}~dn.
\end{multline*}
	The change of variable $n\mapsto m^{-1}n$ implies that
	\begin{multline*}
		T_{\Psi}^{-}f(m\sigma^{l})=C\int\limits_{N}\sum\limits_{j\in\mathbb{Z}}f(mn\sigma^{j})q^{-|\sigma^{-j}n^{-1}\sigma^{l}\cdot o|/p}\\
		\cdot\left(\int\limits_{\mathbb{T}}\Psi_{\delta_{p}}(m\sigma^{l},s)q^{is|\sigma^{-j}n^{-1}\sigma^{l}\cdot o|}\mathbf{c}_{-\delta_{p}}(-s)^{-1}~ds\right)\chi_{-}(l-j)q^{-j}~dn.
	\end{multline*}
After a change of variable $n\mapsto \sigma^{-j}n\sigma^{j}$ and using \eqref{modularfunction}, we have
	\begin{multline*}
		T_{\Psi}^{-}f(m\sigma^{l})=C\int\limits_{N}\sum\limits_{j\in\mathbb{Z}}f(m\sigma^{j}n)q^{-|n^{-1}\sigma^{l-j}\cdot o|/p}\\
		\cdot\left(\int\limits_{\mathbb{T}}\Psi_{\delta_{p}}(m\sigma^{l},s)q^{is|n^{-1}\sigma^{l-j}\cdot o|}\mathbf{c}_{-\delta_{p}}(-s)^{-1}~ds\right)\chi_{-}(l-j)~dn.
	\end{multline*}
Since $\chi_{-}(l-j)=0$, whenever $l\geq j$, therefore using Lemma \ref{nproperties} $(2)$, we get
	\begin{multline}\label{kappamkappamn}
		T_{\Psi}^{-}f(m\sigma^{l})=C\int\limits_{N}\sum\limits_{j\in\mathbb{Z}}f(m\sigma^{j}n)q^{-|n^{-1}\cdot o|/p}q^{(l-j)/p}\\
		\cdot\left(\int\limits_{\mathbb{T}}\Psi_{\delta_{p}}(m\sigma^{l},s)q^{is(|n^{-1}\cdot o|-(l-j))}\mathbf{c}_{-\delta_{p}}(-s)^{-1}~ds\right)\chi_{-}(l-j)~dn.
	\end{multline}
For fixed $m,n\in N$, let us introduce the notation
\begin{equation}\label{kappapdoz}
\begin{aligned}
\kappa_{m}(l,|n^{-1}\cdot o|-(l-j))&=\int\limits_{\mathbb{T}}\Psi_{\delta_{p}}(m\sigma^{l},s)q^{is(|n^{-1}\cdot o|-(l-j))}\mathbf{c}_{-\delta_{p}}(-s)^{-1}~ds\\
&=\kappa_{m,n}(l,l-j).
\end{aligned}
\end{equation}
We shall use both the conventions $\kappa_{m}(\cdot,\cdot)$ and $\kappa_{m,n}(\cdot,\cdot)$ for the above integral as and when necessary. Consequently, the expression of $T_{\Psi}^{-}f$ in (\ref{kappamkappamn}) takes the form
	$$T_{\Psi}^{-}f(m\sigma^{l})=C\int\limits_{N}\sum\limits_{j\in\mathbb{Z}}f(m\sigma^{j}n)q^{-|n^{-1}\cdot o|/p}q^{(l-j)/p}\kappa_{m,n}(l,l-j)\chi_{-}(l-j)~dn.$$
We recall from Lemma \ref{qpn} that $Q_{p}(n)=q^{-|n\cdot o|/p}$. Consequently,
	\begin{multline*}
		\|T_{\Psi}^{-}f\|_{L^{p}(NA)}=C\left\|~\left\|~\int\limits_{N}Q_{p}(n^{-1})\sum\limits_{j\in\mathbb{Z}}f(m\sigma^{j}n)q^{-j/p}\kappa_{m,n}(l,l-j)\right.\right.\\
		\left.\left.\cdot\chi_{(-\infty,-1]}(l-j)~dn\right\|_{L^{p}(\mathbb{Z},l)}~\right\|_{L^{p}(N,\,dm)},
	\end{multline*}
and using Minkowski's inequality, we deduce that
	\begin{multline}\label{minkowskifinal}
		\|T_{\Psi}^{-}f\|_{L^{p}(NA)}\leq C\int\limits_{N}Q_{p}(n^{-1})\left\|~\left\|\sum\limits_{j\in\mathbb{Z}}f(m\sigma^{j}n)q^{-j/p}\kappa_{m,n}(l,l-j)\right.\right.\\
		\left.\left.\cdot\chi_{(-\infty,-1]}(l-j)\right\|_{L^{p}(\mathbb{Z},l)}~\right\|_{L^{p}(N,\,dm)}~dn.
	\end{multline}
For a fixed $m\in N$, let us define the operator
$$\mathcal{B}_n\theta(l)=\sum\limits_{j\in\mathbb{Z}}\theta(j)\kappa_{m,n}(l,l-j)\chi_{(-\infty,-1]}(l-j),\quad\text{for all }l\in\mathbb{Z}.$$
We claim that $ \mathcal{B}_n $ defines a bounded pseudo-differential operator from $L^{p}(\mathbb{Z})$ to itself. Moreover, we will show there exists a constant $ C_{p}>0 $, independent of $m$ and $n$, such that the following holds:
	\begin{equation}\label{eqn_bddness_of_T_1}
		\|\mathcal{B}_n \theta\|_{L^p(\Z)} \leq C_{p} (1+|n^{-1}\cdot o|)\|\theta\|_{L^p(\Z)}, \quad \text{ for all } \theta \in L^p(\Z).
	\end{equation}
Assuming the claim above, we complete the proof of Theorem \ref{tpsiminus}. Plugging in the estimate \eqref{eqn_bddness_of_T_1} with $\theta(j)=f(m\sigma^{j}n)q^{-j/p}$, into \eqref{minkowskifinal}, we derive that
\begin{align*} 
\|T_{\Psi}^{-}f\|_{L^{p}(NA)}&\leq C_{p}\int\limits_{N} (1+|n^{-1}\cdot o|)Q_{p}(n^{-1})\left\| \left( \sum\limits_{j\in\mathbb{Z}}|f(m\sigma^{j}n)|^pq^{-j} \right)^{1/p} \right\|_{L^{p}(N,\,dm)}dn\\
& = C_{p} \int\limits_N   \left(  \int\limits_N \sum\limits_{j\in\mathbb{Z}}|f(m\sigma^{j}n\sigma^{-j}\sigma^{j})|^pq^{-j} dm \right)^{1/p} (1+|n^{-1}\cdot o|)Q_{p}(n^{-1})~dn\\
&\leq C _{p}\|f\|_{L^p(NA)},
\end{align*}
where in the last inequality, we have used Theorem \ref{iwasawa} and Lemma \ref{qpn}. This proves Theorem \ref{tpsiminus}, modulo the claim in \eqref{eqn_bddness_of_T_1}.

\noindent \textbf{STEP II : Connection with pseudo-differential operators on $ \Z. $} Here we  prove \eqref{eqn_bddness_of_T_1}. We recall from (\ref{kappapdoz}) an alternative definition  of $\kappa_{m,n}(\cdot,\cdot)$, and observe that the $L^p(\Z)$ operator norm of $\mathcal{B}_n$ is equal to that of $\mathcal{B}^{\prime}_n$, defined by
	$$\mathcal{B}^{\prime}_n\theta(l):=\sum\limits_{j\in\mathbb{Z}}\theta(j)\kappa_{m}(-l,l-j)\chi_{[1+|n^{-1}\cdot o|,\infty)}(l-j),\quad\text{for all }l\in\mathbb{Z}.$$
In fact, an explicit calculation yields
\begin{align*}\allowdisplaybreaks
		\vertiii{\mathcal{B}_n}_{p}&=\sup\limits_{\|\theta\|_{L^{p}(\mathbb{Z})}=1}\left(\sum\limits_{l\in\mathbb{Z}}\left|\sum\limits_{j\in\mathbb{Z}}\theta(j)\kappa_{m}(l,|n^{-1}\cdot o|-(l-j))\chi_{(-\infty,-1]}(l-j)\right|^{p}~\right)^{1/p}\\
		&=\sup\limits_{\|\theta\|_{L^{p}(\mathbb{Z})}=1}\left(\sum\limits_{l\in\mathbb{Z}}\left|\sum\limits_{j\in\mathbb{Z}}\theta(j-|n^{-1}\cdot o|)\kappa_{m}(l,j-l)\chi_{(-\infty,-1-|n^{-1}\cdot o|]}(l-j)\right|^{p}~\right)^{1/p}\\
		&=\sup\limits_{\|\theta\|_{L^{p}(\mathbb{Z})}=1}\left(\sum\limits_{l\in\mathbb{Z}}\left|\sum\limits_{j\in\mathbb{Z}}\theta(j-|n^{-1}\cdot o|)\kappa_{m}(l,j-l)\chi_{[1+|n^{-1}\cdot o|,\infty)}(j-l)\right|^{p}~\right)^{1/p}\\
		&=\sup\limits_{\|\theta\|_{L^{p}(\mathbb{Z})}=1}\left(\sum\limits_{l\in\mathbb{Z}}\left|\sum\limits_{j\in\mathbb{Z}}\theta(j-|n^{-1}\cdot o|)\kappa_{m}(-l,j+l)\chi_{[1+|n^{-1}\cdot o|,\infty)}(j+l)\right|^{p}~\right)^{1/p}\\
		&=\sup\limits_{\|\theta\|_{L^{p}(\mathbb{Z})}=1}\left(\sum\limits_{l\in\mathbb{Z}}\left|\sum\limits_{j\in\mathbb{Z}}\theta(-j-|n^{-1}\cdot o|)\kappa_{m}(-l,l-j)\chi_{[1+|n^{-1}\cdot o|,\infty)}(l-j)\right|^{p}~\right)^{1/p}\\
&=\sup\limits_{\|\theta\|_{L^{p}(\mathbb{Z})}=1}\left(\sum\limits_{l\in\mathbb{Z}}\left|\sum\limits_{j\in\mathbb{Z}}\theta(j)\kappa_{m}(-l,l-j)\chi_{[1+|n^{-1}\cdot o|,\infty)}(l-j)\right|^{p}~\right)^{1/p}\\
&=\vertiii{\mathcal{B}^{\prime}_n}_{p}.
	\end{align*}
Thus, it is enough to  prove that $\mathcal{B}^{\prime}_n$ satisfies \eqref{eqn_bddness_of_T_1}. We can write 
\begin{equation}\label{eqn_decompo_ofA_n_as_I}
		 \mathcal{B}^{\prime}_n\theta (l) = \mathcal{I}_+\theta(l)-\mathcal{I}_- \theta(l)-\mathcal{I}_n\theta(l), \quad \text{for all } l \in \Z,
\end{equation} 
where $\mathcal{I}_+$, $\mathcal{I}_-$ and $\mathcal{I}_n$ are pseudo-differential operators on $\Z$ defined by
\begin{align*}
\mathcal{I}_+ \theta(l)&=\sum\limits_{j\in\mathbb{Z}}\theta(j)\kappa_{m}(-l,l-j),\\
\mathcal{I}_-\theta(l)&= \sum\limits_{j\in\mathbb{Z}}\theta(j)\kappa_{m}(-l,l-j)\chi_{(-\infty,-1]}(l-j),\\
\mathcal{I}_n\theta(l)&= \sum\limits_{j\in\mathbb{Z}}\theta(j)\kappa_{m}(-l,l-j)\chi_{[0,|n^{-1}\cdot o|]}(l-j).
\end{align*}
First we shall estimate $\mathcal{I}_+$. Using the explicit expression of $\kappa_{m}(\cdot,\cdot)$ from (\ref{kappapdoz}), it follows that
	\begin{align*}
		\mathcal{I}_+\theta(l)&=\sum\limits_{j\in\mathbb{Z}}\theta(j)\left(\int\limits_{\mathbb{T}}\Psi_{\delta_{p}}(m\sigma^{-l},s)q^{is(l-j)}\mathbf{c}_{-\delta_{p}}(-s)^{-1}~ds\right)\\
		&=\int\limits_{\mathbb{T}}\Psi_{\delta_{p}}(m\sigma^{-l},s)\mathcal{F}\theta(s)\mathbf{c}_{-\delta_{p}}(-s)^{-1}q^{isl}~ds.
	\end{align*}
	The change of variable $s\mapsto -s$ and the Weyl-invariance of $\Psi$ yields
	$$\mathcal{I}_+\theta(l)=\int\limits_{\mathbb{T}}\Psi_{-\delta_{p}}(m\sigma^{-l},s)\mathcal{F}\theta(-s)\mathbf{c}_{-\delta_{p}}(s)^{-1}q^{-isl}~ds.$$
	Hence the operator norm of $\mathcal{I}_+$ on $L^{p}(\mathbb{Z})$ becomes
	\begin{align*}
		\vertiii{\mathcal{I}_+}_{p}&=\sup\limits_{\|\theta\|_{L^{p}(\mathbb{Z})}=1}\left(\sum\limits_{l\in\mathbb{Z}}\left|\int\limits_{\mathbb{T}}\Psi_{-\delta_{p}}(m\sigma^{-l},s)\mathcal{F}\theta(-s)\mathbf{c}_{-\delta_{p}}(s)^{-1}q^{-isl}~ds\right|^{p}~\right)^{1/p}\\
		&=\sup\limits_{\|\theta\|_{L^{p}(\mathbb{Z})}=1}\left(\sum\limits_{l\in\mathbb{Z}}\left|\int\limits_{\mathbb{T}}\Psi_{-\delta_{p}}(m\sigma^{l},s)\mathcal{F}\theta(-s)\mathbf{c}_{-\delta_{p}}(s)^{-1}q^{isl}~ds\right|^{p}~\right)^{1/p}.
	\end{align*}
We observe that the inner integral above defines a pseudo-differential operator on $\mathbb{Z}$ as in \eqref{pdoz1} with symbol $\psi(l,s)=\Psi(m\sigma^{l},s-i\delta_{p})$. Since $\Psi_{-\delta_{p}}(m\cdot,\cdot)$ defines a bounded pseudo-differential operator on $L^{p}(\mathbb{Z})$, the above expression gives
	\begin{equation}\label{t3final}
		\vertiii{\mathcal{I}_+}_{p}\leq \vertiii{\Psi_{-\delta_{p}}(m\cdot,\cdot)}_{p}\left(\sup\limits_{\|\theta\|_{L^{p}(\mathbb{Z})}=1} \|\mathcal{F}^{-1}\left(\mathcal{F}\theta(-\cdot)\mathbf{c}_{-\delta_{p}}(\cdot)^{-1}\right)\|_{L^{p}(\mathbb{Z})} \right).
	\end{equation}
Define $\theta^{\#}(j)=\theta(-j)$, where $j\in\mathbb{Z}$. A simple calculation yields that  $\mathcal{F}(\theta^{\#})(s)=\mathcal{F}\theta(-s)$. Implementing this fact and using Young's inequality in (\ref{t3final}) we   obtain
	\begin{align*}
		\vertiii{\mathcal{I}_+}_{p}&\leq \vertiii{\Psi_{-\delta_{p}}(m\cdot,\cdot)}_{p}\left(\sup\limits_{\|\theta\|_{L^{p}(\mathbb{Z})}=1} \|\theta^{\#}\ast_{\mathbb{Z}}\mathcal{F}^{-1}\left(\mathbf{c}_{-\delta_{p}}(\cdot)^{-1}\right)\|_{L^{p}(\mathbb{Z})} \right)\\
		&\leq \vertiii{\Psi_{-\delta_{p}}(m\cdot,\cdot)}_{p}\left(\sup\limits_{\|\theta\|_{L^{p}(\mathbb{Z})}=1} \|\theta^{\#}\|_{L^{p}(\mathbb{Z})}\|\mathcal{F}^{-1}\left(\mathbf{c}_{-\delta_{p}}(\cdot)^{-1}\right)\|_{L^{1}(\mathbb{Z})} \right).
	\end{align*}
	Since $s\mapsto\mathbf{c}(-s-i\delta_{p})^{-1}$ is a smooth function on $\mathbb{T}$, we finally get
\begin{equation}\label{eqn_op_norm_est_of_I_0}
\vertiii{\mathcal{I}_+}_{p}\leq C_{p}\left(\sup\limits_{m\in N}\vertiii{\Psi_{-\delta_{p}}(m\cdot,\cdot)}_{p}\right).
	\end{equation}
	Next, we consider the operator
	$$\mathcal{I}_-\theta(l)=\sum\limits_{j\in\mathbb{Z}}\theta(j)\kappa_{m}(-l,l-j)\chi_{(-\infty,-1]}(l-j).$$
Recalling the expression of $\kappa_{m}(\cdot,\cdot)$ from (\ref{kappapdoz}), it follows that
	$$\kappa_{m}(-l,l-j)=\int\limits_{\mathbb{T}}\Psi_{\delta_{p}}(m\sigma^{-l},s)q^{is(l-j)}\mathbf{c}_{-\delta_{p}}(-s)^{-1}~ds.$$
We observe that the integrand above  is holomorphic on the strip
	$$\textbf{S}^{-}_{2\delta_{p}}:=\{z\in\mathbb{C}:  -2\delta_{p}<\Im z<0\}.$$
	Thus, using Cauchy's integral theorem and the dominated convergence theorem, we get
	$$\kappa_{m}(-l,l-j)=q^{2\delta_{p}(l-j)}\int\limits_{\mathbb{T}}\Psi_{-\delta_{p}}(m\sigma^{-l},s)q^{is(l-j)}\mathbf{c}_{\delta_{p}}(-s)^{-1}~ds.$$
Taking modulus on both the sides of the above expression, we get the pointwise estimate
	$$|\kappa_{m}(-l,l-j)|\leq C_{p}\,q^{2\delta_{p}(l-j)}\|\Psi\|_{L^{\infty}(\mathfrak{X}\times S_{p})},\quad\text{for all }l,j\in\mathbb{Z}.$$
This in turn implies that
	\begin{align}\label{t4final}
		\vertiii{\mathcal{I}_-}_{p}&\leq\sup\limits_{\|\theta\|_{L^{p}(\mathbb{Z})}=1}\left(\sum\limits_{l\in\mathbb{Z}}\left(\sum\limits_{j\in\mathbb{Z}}|\theta(j)|~|\kappa_{m}(-l,l-j)|~\chi_{(-\infty,-1]}(l-j)\right)^{p}~\right)^{1/p}\nonumber\\
		&\leq C_{p}~\|\Psi\|_{L^{\infty}(\mathfrak{X}\times S_{p})}\sup\limits_{\|\theta\|_{L^{p}(\mathbb{Z})}=1}\left(\sum\limits_{l\in\mathbb{Z}}\left(\sum\limits_{j\in\mathbb{Z}}|\theta(j)|~q^{2\delta_{p}(l-j)}~\chi_{(-\infty,-1]}(l-j)\right)^{p}~\right)^{1/p}\nonumber\\
		&=C_{p}~\|\Psi\|_{L^{\infty}(\mathfrak{X}\times S_{p})}\sup\limits_{\|\theta\|_{L^{p}(\mathbb{Z})}=1}\left(\sum\limits_{l\in\mathbb{Z}}\left(\sum\limits_{j\in\mathbb{Z}}|\theta(l-j)|~q^{2j\delta_{p}}~\chi_{(-\infty,-1]}(j)\right)^{p}~\right)^{1/p}\nonumber\\
		&\leq C_{p}~\|\Psi\|_{L^{\infty}(\mathfrak{X}\times S_{p})}\sup\limits_{\|\theta\|_{L^{p}(\mathbb{Z})}=1}\left(\sum\limits_{j\in\mathbb{Z}}\left(q^{2j\delta_{p}}~\chi_{(-\infty,-1]}(j)\left(\sum\limits_{l\in\mathbb{Z}}|\theta(l-j)|^{p}\right)^{1/p}\right)\right)\nonumber\\
		&\leq C_{p}~\|\Psi\|_{L^{\infty}(\mathfrak{X}\times S_{p})}.
\end{align}
Finally, we investigate the operator norm of
	$$\mathcal{I}_n\theta(l)=\sum\limits_{j\in\mathbb{Z}}\theta(j)\kappa_{m}(-l,l-j)\chi_{[0,|n^{-1}\cdot o|]}(l-j).$$
The expression of $\kappa_{m}(\cdot,\cdot)$ from (\ref{kappapdoz}) gives us the trivial estimate
	$$|\kappa_{m}(-l,l-j)|\leq C_{p}\|\Psi\|_{L^{\infty}(\mathfrak{X}\times S_{p})},\quad\text{for all }l,j\in\mathbb{Z}.$$
Consequently, as in \eqref{t4final} we have that
	\begin{equation}\label{t5final}
		\vertiii{\mathcal{I}_n}_{p}\leq C_{p}(1+|n^{-1}\cdot o|)\,\|\Psi\|_{L^{\infty}(\mathfrak{X}\times S_{p})}.
	\end{equation}
Plugging in the estimates \eqref{eqn_op_norm_est_of_I_0}, \eqref{t4final} and \eqref{t5final} of the operators $ \mathcal{I}_+, \mathcal{I}_-$, and $\mathcal{I}_n $ respectively,  into \eqref{eqn_decompo_ofA_n_as_I}, we get the desired  claim \eqref{eqn_bddness_of_T_1}. This also concludes the proof of Theorem \ref{tpsiminus}.
\end{proof}
The remainder of this section will be devoted to the proof of $ L^p(NA) $-boundedness of the operator   $ T_\Psi ^+$, for $1<p<2$. Consequently, we will be able to conclude that $ T_\Psi $ is bounded from $ L^p(\mathfrak{X}) $ to itself.  
   
\begin{theorem}\label{thm:tpsi_plus_bdd}
	Let $ 1<p<2$. Suppose that $ \Psi $ satisfies the hypothesis (1) and (2) of Theorem \ref{mainresult1} and  $T_{\Psi}^{+}$ be as in \eqref{tpsipm}. Then $T_{\Psi}^{+}$   is bounded from $L^{p}(NA)$ to itself. Moreover, there exists a constant $ C_{p}>0 $  such that  
	$$\|T_{\Psi}^{+}f\|_{L^{p}(NA)} \leq C_{p} \| \Psi\|_{L^\infty(\mathfrak{X}\times S^\circ_p)}   \|f\|_{L^{p}(NA)},\quad\text{for all }f\in L^{p}(NA).$$ 
\end{theorem}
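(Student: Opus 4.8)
The plan is to reuse the kernel computation from the proof of Theorem \ref{tpsiminus}, which is insensitive to the cut-off $\chi_{\pm}$, and then to bound $T_{\Psi}^{+}$ by a positive operator whose $L^{p}(NA)$-norm is controlled by $\|\Psi\|_{L^{\infty}}$ alone (note that hypothesis (3) is \emph{not} assumed here). First I would run STEP I of Theorem \ref{tpsiminus} verbatim: after the holomorphic shift of the contour and the changes of variable $n\mapsto m^{-1}n$ and $n\mapsto\sigma^{-j}n\sigma^{j}$ (none of which touch the factor $\chi_{[0,\infty)}(l-j)$), the operator becomes
\begin{multline*}
T_{\Psi}^{+}f(m\sigma^{l})=C\int\limits_{N}\sum\limits_{j\in\Z}f(m\sigma^{j}n)\,q^{-|n^{-1}\sigma^{l-j}\cdot o|/p}\\
\cdot\left(\int\limits_{\mathbb{T}}\Psi_{\delta_{p}}(m\sigma^{l},s)\,q^{is|n^{-1}\sigma^{l-j}\cdot o|}\,\mathbf{c}_{-\delta_{p}}(-s)^{-1}\,ds\right)\chi_{[0,\infty)}(l-j)\,dn.
\end{multline*}
Since there is no symbol hypothesis to exploit, the idea is to discard all oscillation: bounding $|\Psi_{\delta_{p}}|\le\|\Psi\|_{L^{\infty}(\mathfrak{X}\times S_{p})}$ and using that $s\mapsto\mathbf{c}(-s-i\delta_{p})^{-1}$ is smooth, hence $L^{1}$, on $\mathbb{T}$, one obtains the pointwise domination
\[
|T_{\Psi}^{+}f(m\sigma^{l})|\le C\,\|\Psi\|_{L^{\infty}}\int\limits_{N}\sum\limits_{j\in\Z}|f(m\sigma^{j}n)|\,q^{-|n^{-1}\sigma^{l-j}\cdot o|/p}\,\chi_{[0,\infty)}(l-j)\,dn.
\]

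Next I would repeat the Minkowski/transference scheme of Theorem \ref{tpsiminus}. Pulling $\int_{N}dn$ outside the $L^{p}(NA)$-norm in the variable $m\sigma^{l}$ via Minkowski's integral inequality, it remains to estimate, for each fixed $n$, the inner norm of $l\mapsto\sum_{j}|f(m\sigma^{j}n)|\,g_{n}(l-j)$, where $g_{n}(k)=q^{-|n^{-1}\sigma^{k}\cdot o|/p}\chi_{[0,\infty)}(k)$. This is a convolution on $\Z$; absorbing the weight $q^{-l}$ of the Haar measure of $NA$ by the substitution that turns the weighted $\ell^{p}$-norm into an ordinary one, Young's inequality on $\Z$ bounds the inner norm by $\big(\int_{N}\sum_{j}|f(m\sigma^{j}n)|^{p}q^{-j}\,dm\big)^{1/p}\sum_{k\ge0}q^{-|n^{-1}\sigma^{k}\cdot o|/p}q^{-k/p}$. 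Using the left-invariance of $d\mu$ together with Theorem \ref{iwasawa}, exactly as in the concluding display of the proof of Theorem \ref{tpsiminus}, the left factor equals $\|f\|_{L^{p}(NA)}$, whence
\[
\|T_{\Psi}^{+}f\|_{L^{p}(NA)}\le C\,\|\Psi\|_{L^{\infty}}\,\|f\|_{L^{p}(NA)}\int\limits_{N}\Big(\sum\limits_{k\ge0}q^{-|n^{-1}\sigma^{k}\cdot o|/p}q^{-k/p}\Big)\,dn.
\]

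It remains to show the last integral is finite, and this is where $1<p<2$ is essential and where the genuine obstacle lies: Lemma \ref{nproperties}(2) handles only $k\le d(o,n\cdot o)/2$, so I must compute $|n^{-1}\sigma^{k}\cdot o|=d(n\cdot o,\omega^{0}_{k})$ for \emph{all} $k\ge0$. Writing $|n\cdot o|=2k_{0}$ and arguing as in the proof of Lemma \ref{nproperties} (the point $n\cdot o$ lies on $\mathfrak{H}(\omega^{+}_{0},0)$, and its geodesic ray towards $\omega^{+}_{0}$ meets the reference ray at $\omega^{0}_{k_{0}}$), one finds $|n^{-1}\sigma^{k}\cdot o|=2k_{0}-k$ for $0\le k\le k_{0}$ and $|n^{-1}\sigma^{k}\cdot o|=k$ for $k\ge k_{0}$. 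Feeding this into the inner sum, the two regimes contribute $C\,q^{-2k_{0}/p}$ and $k_{0}\,q^{-2k_{0}/p}$ respectively, so the sum is $\le C(1+|n\cdot o|)\,q^{-|n\cdot o|/p}=C(1+|n\cdot o|)\,Q_{p}(n)$. Finally, Lemma \ref{qpn} (applied with $l=0$ and $l=1$) gives $\int_{N}(1+|n\cdot o|)Q_{p}(n)\,dn<\infty$ precisely because $p<2$, yielding $\|T_{\Psi}^{+}f\|_{L^{p}(NA)}\le C_{p}\|\Psi\|_{L^{\infty}(\mathfrak{X}\times S^{\circ}_{p})}\|f\|_{L^{p}(NA)}$. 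The hard part is therefore not any single estimate but the structural observation that in the ``$+$'' direction one must trade the pseudo-differential reduction of Theorem \ref{tpsiminus} for sheer kernel size, and that the exponential volume growth of $N$ is tamed only in the subcritical range $p<2$.
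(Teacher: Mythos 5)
Your proof is correct, but it takes a genuinely different route from the paper's. The paper argues by duality: it estimates $\langle T_{\Psi}^{+}f,\varphi\rangle$ for $\varphi\in L^{p'}(NA)$, applies H\"older's inequality in the $N$-variable to produce the radialized functions $F(\sigma^j)$ and $\varPhi(\sigma^{l+j})$, and then --- crucially --- invokes the evenness of the Abel transform to rewrite $\int_N q^{-|m\sigma^{l}\cdot o|/p}\,dm$ as $q^{l}\int_N q^{-|m\sigma^{-l}\cdot o|/p}\,dm$ for $l\ge 0$, precisely so that Lemma \ref{nproperties}(2) becomes applicable (its hypothesis $j\le d(o,n\cdot o)/2$ holds trivially for $j=-l\le 0$); the proof closes with Lemma \ref{qpn} ($l=0$ only) and the convergence of $\sum_{l\ge 0}q^{l(1/p'-1/p)}$. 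You instead estimate $\|T_{\Psi}^{+}f\|_{L^p(NA)}$ directly by Minkowski and Young's inequality on $\Z$, and you confront the failure of Lemma \ref{nproperties}(2) for $k>d(o,n\cdot o)/2$ head-on by computing $|n^{-1}\sigma^{k}\cdot o|$ in both regimes; the second regime $|n^{-1}\sigma^{k}\cdot o|=k$ for $k\ge k_0$ is exactly the content of the Remark following Lemma \ref{nproperties}, so this is legitimate, and it yields the bound $C(1+|n\cdot o|)Q_p(n)$, after which Lemma \ref{qpn} with $l=0,1$ finishes (you have the two regimes' contributions swapped --- the polynomial factor $k_0$ comes from the range $k\le k_0$ and the geometric tail from $k\ge k_0$ --- but the total is the same). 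What each approach buys: yours is more self-contained (no duality, no Abel-transform evenness, and it parallels the Minkowski/Young scheme already used for $T_{\Psi}^{-}$), at the cost of the explicit two-regime distance computation; the paper's duality setup is the one that generalizes to $p>2$, where it is reused almost verbatim in the proof of Theorem \ref{mainresult2}.
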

\begin{proof}
	Recalling the definition of $T_{\Psi}^{+}  $ from \eqref{tpsipm}, we have
	$$T^+_{\Psi}f(m\sigma^l) =\int\limits_{N}\sum\limits_{j\in\mathbb{Z}}f(n\sigma^{j})K(m\sigma^{l},n\sigma^{j})\chi_{+}(l-j)q^{-j}~dn,$$
which after substituting the expression of $ K(m\sigma^{l},n\sigma^{j})$ from \eqref{eqn_expression_of_K_1st} becomes 
$$T^+_\Psi f(m\sigma^l) = C  \int\limits_{N}\sum\limits_{j\in\mathbb{Z}}f(n\sigma^{j})  \left(\int\limits_{\mathbb{T}}\Psi(m\sigma^l,s) \mathbf{c}(-s)^{-1}\,q^{(is-1/2)|\sigma^{-j}n^{-1}m\sigma^{l}\cdot o|}ds\right)\chi_{+}(l-j)q^{-j} dn.$$
Using the same argument as in  Theorem \ref{tpsiminus}, we move the contour of integration in the inner integral from $ \T $  to $ \T+i\delta_{p} $  and obtain
\begin{multline}\label{eqn:equi_expn_of_T_psi_+}
T_{\Psi}^{+}f(m\sigma^{l})=C\int\limits_{N}\sum\limits_{j\in\mathbb{Z}}f(n\sigma^{j})q^{-|\sigma^{-j}n^{-1}m\sigma^{l}\cdot o|/p}\\
\cdot \left(\int\limits_{\mathbb{T}}\Psi_{\delta_{p}}(m\sigma^{l},s) \mathbf{c}_{-\delta_{p}}(-s)^{-1} q^{is|\sigma^{-j}n^{-1}m\sigma^{l}\cdot o|}~ds\right)\chi_{+}(l-j)q^{-j}~dn.
\end{multline}
To get the desired result, it suffices to prove that for any compactly supported functions $ f,\varphi : N A\rightarrow \C$, one has
$$\left| \left\langle T_{\Psi}^{+}f, \varphi\right\rangle\right|=  \left| \int\limits_{N}\sum\limits_{l\in\mathbb{Z} }   T_{\Psi}^{+}f(m\sigma^{l}) \varphi(m\sigma^l) q^{-l} dm\right|\leq C_p\, \|f\|_{L^p(NA)}  \|\varphi\|_{L^{p'}(NA)}.$$
Plugging in the expression of $T_{\Psi}^{+}f  $ from \eqref{eqn:equi_expn_of_T_psi_+}, we get
\begin{multline*}
\left\langle T_{\Psi}^{+}f, \varphi\right\rangle= C\int\limits_{N}  \int\limits_{N} \sum\limits_{l\in\mathbb{Z} } \sum\limits_{j\in\mathbb{Z} }   f(n\sigma^{j})  \varphi(m\sigma^l)\chi_{+}(l-j) q^{-|\sigma^{-j}n^{-1}m\sigma^{l}\cdot o|/p} q^{-j-l}\\
\cdot \left(\int\limits_{\mathbb{T}}\Psi_{\delta_{p}}(m\sigma^{l},s) \mathbf{c}_{-\delta_{p}}(-s)^{-1} q^{is|\sigma^{-j}n^{-1}m\sigma^{l}\cdot o|}~ds\right) \,dm\, dn,
\end{multline*}
which after a change of variable $m\mapsto n^{-1}m$ yields
\begin{multline*}
\left\langle T_{\Psi}^{+}f, \varphi\right\rangle=C \int\limits_{N}  \int\limits_{N} \sum\limits_{l\in\mathbb{Z} } \sum\limits_{j\in\mathbb{Z} } f(n\sigma^{j})\varphi(nm\sigma^l)\chi_{+}(l-j) q^{-|\sigma^{-j}m\sigma^{l}\cdot o|/p}q^{-j-l}\\
\cdot \left(\int\limits_{\mathbb{T}}\Psi_{\delta_{p}}(nm\sigma^{l},s)\mathbf{c}_{-\delta_{p}}(-s)^{-1}q^{is|\sigma^{-j}m\sigma^{l}\cdot o|}~ds\right)~dm~dn.
\end{multline*}
We recall that the map $m\mapsto \sigma^{-j} m \sigma^j$ is a dilation of $N$. Hence by using \eqref{modularfunction}, we get
\begin{multline*}
\left\langle T_{\Psi}^{+}f, \varphi\right\rangle= C \int\limits_{N}  \int\limits_{N} \sum\limits_{l\in\mathbb{Z} } \sum\limits_{j\in\mathbb{Z} }   f(n\sigma^{j})  \varphi(n\sigma^jm \sigma^{l-j})\chi_{+}(l-j) q^{-|m \sigma^{l-j}\cdot o|/p}q^{-l}\\
\cdot \left(\int\limits_{\mathbb{T}}\Psi_{\delta_{p}}(n\sigma^j m \sigma^{l-j},s)\mathbf{c}_{-\delta_{p}}(-s)^{-1} q^{is| m \sigma^{l-j}\cdot o|}~ds\right) ~dm~dn.
\end{multline*}
Next, by using Fubini's theorem and the change of variable $ l \mapsto l-j $, it follows that    
\begin{multline*}
	\left\langle T_{\Psi}^{+}f, \varphi\right\rangle= C \int\limits_{N}  \int\limits_{N} \sum\limits_{j\in\mathbb{Z} } \sum\limits_{l\in\mathbb{Z}_+ }   f(n\sigma^{j})  \varphi(n\sigma^jm \sigma^{l}) q^{-|m \sigma^{l }\cdot o|/p}q^{-l-j}\\
	\cdot \left(\int\limits_{\mathbb{T}}\Psi_{\delta_{p}}(n\sigma^j m \sigma^{l},s)\mathbf{c}_{-\delta_{p}}(-s)^{-1} q^{is| m \sigma^{l}\cdot o|}~ds\right) ~dm~dn.
\end{multline*}
Taking modulus on both the sides of the above expression and using the boundedness of $\Psi$ and $\mathbf{c}_{-\delta_{p}}(-s)^{-1}$ on $\mathfrak{X}\times S_{p}$ and $\T$ respectively, we deduce that
\begin{multline}\label{eqn:equi_expn_of_T_psi_+_2} 
\left|	\left\langle T_{\Psi}^{+}f, \varphi\right\rangle\right|\leq C_{p} \| \Psi\|_{L^\infty(\mathfrak{X}\times S_p)} \\
 \int\limits_{N}\int\limits_{N}\sum\limits_{j\in\mathbb{Z} } \sum\limits_{l\in\mathbb{Z}_+ }   |f(n\sigma^{j}) |  |\varphi(n\sigma^jm \sigma^{l})|  q^{-|m \sigma^{l}\cdot o|/p}q^{-l-j} \,dm\,dn.
\end{multline}
Now let us define,
	\begin{equation}\label{fphi}
			F(\sigma^j) =  \left[\int\limits_{N} \left|f(n\sigma^j)\right|^p dn  \right]^{1/p}, \quad \text{and} \quad
			\varPhi(\sigma^j) =  \left[\int\limits_{N} \left|\varphi(n\sigma^j)\right|^{p^\prime} dn  \right]^{1/p^\prime}.
	\end{equation}
Applying H\"older's inequality in \eqref{eqn:equi_expn_of_T_psi_+_2} and using \eqref{fphi}, we get
$$\left|	\left\langle T_{\Psi}^{+}f, \varphi\right\rangle\right|\leq C_{p} \| \Psi\|_{L^\infty(\mathfrak{X}\times S_p)}    \int\limits_{N} \sum\limits_{j\in\mathbb{Z} } \sum\limits_{l\in\mathbb{Z}_+ }   F(\sigma^{j})   \varPhi(\sigma^{l+j})  q^{-|m \sigma^{l}\cdot o|/p}q^{-l-j}\,dm,$$
which after using Fubini's theorem  yields
\begin{equation}\label{eqn_T_Psi_bef_Abel}
	\left|	\left\langle T_{\Psi}^{+}f, \varphi\right\rangle\right|\leq C_{p} \| \Psi\|_{L^\infty(\mathfrak{X}\times S_p)}     \sum\limits_{j\in\mathbb{Z} } \sum\limits_{l \in \Z_+ }   F(\sigma^{j})   \varPhi(\sigma^{l+j})  q^{-l-j} \int\limits_{N}q^{-|m \sigma^{l}\cdot o|/p} \, dm.
\end{equation}
Since the Abel transform of a radial function is even (see \cite[Theorem 2.5]{Cowling_Meda_98}), it follows that  
$$\int\limits_{N}q^{-|m \sigma^{l}\cdot o|/p} \, dm= q^{l} \int\limits_{N}q^{-|m \sigma^{-l}\cdot o|/p} \, dm, \quad \text{ for all } l\in \Z_+.$$
Putting  the formula above in \eqref{eqn_T_Psi_bef_Abel}, we obtain
$$\left|	\left\langle T_{\Psi}^{+}f, \varphi\right\rangle\right| \leq C_{p} \| \Psi\|_{L^\infty(\mathfrak{X}\times S_p)}    \sum\limits_{j\in\mathbb{Z} } \sum\limits_{l \in \Z_+ }   F(\sigma^{j})   \varPhi(\sigma^{l+j})   q^{-j} \int\limits_{N}q^{-|m \sigma^{-l}\cdot o|/p} \, dm.$$
Using Lemma \ref{nproperties} (2) in the inequality above, it follows that
\begin{align*}
		\left|	\left\langle T_{\Psi}^{+}f, \varphi\right\rangle\right|  	& \leq C_{p} \| \Psi\|_{L^\infty(\mathfrak{X}\times S_p)}     \sum\limits_{j\in\mathbb{Z} } \sum\limits_{l \in \Z_+}   F(\sigma^{j})   \varPhi(\sigma^{l+j})   q^{-j-l/p} \int\limits_{N}q^{-|m \cdot o|/p} \,dm.
\end{align*}
Finally using Lemma \ref{qpn} and applying H\"older's inequality, we conclude that
\begin{align*}
\left|	\left\langle T_{\Psi}^{+}f, \varphi\right\rangle\right|  	& \leq C_{p} \| \Psi\|_{L^\infty(\mathfrak{X}\times S_p)}     \sum\limits_{j\in\mathbb{Z} } \sum\limits_{l \in \Z_+}   F(\sigma^{j})q^{-j/p}   \varPhi(\sigma^{j+l})   q^{-(j+l)/p^\prime}  q^{l(1/p^\prime-1/p)}\\
&\leq C_{p} \| \Psi\|_{L^\infty(\mathfrak{X}\times S_p)}   \|f\|_{L^p(NA)} \|\varphi\|_{L^{p^{\prime}}(NA)}.
\end{align*}
This completes the proof.
\end{proof}
%\begin{remark}
%	We like to emphasize that if we assume $\Psi$ is uniformly bounded on $\mathfrak{X}\times S^{\circ}_{p}$ in addition with holomorphicity condition $ S_p^\circ $, then the operator $   T_\Psi ^+$  extends to a bounded operator on $ L^p(\mathfrak{X}) $ to itself. That is the   condition  \eqref{eqn_condn_Psi_as_PDO_on_Z}  is required only to conclude the boundedness of $ T_\Psi ^-$, which follows from 
%	the boundedness of pseudo differential operators on $ \Z $.
%\end{remark}
%%%%%%%%%%%%%%%%%%%%%%%%%%%%%%%%%%%%%%%%%%%%%%%%%%%%%%%%%%%%%%%%

\section{Boundedness of pseudo-differential operators, \texorpdfstring{$2<p<\infty$}{2pinfinity}}\label{bddness_of_T_Psi_p>2}
In order to prove Theorem \ref{mainresult2}, we will broadly follow the approach of the proof of Theorem \ref{thm:tpsi_plus_bdd}. To avoid the repetition of arguments, we shall only highlight the crucial steps.

\noindent\textbf{Proof of Theorem \ref{mainresult2}.} Our strategy is to prove that for any compactly supported functions $ f, \varphi :  NA \rightarrow \C $,
\begin{align}\label{eqn:to_prove_T_psi_bdd_p>2}
	\left| \left\langle T_{\Psi} f, \varphi\right\rangle\right|=  \left| \int\limits_{N}\sum\limits_{l\in\mathbb{Z} }   T_{\Psi}^{+}f(m\sigma^{l}) \varphi(m\sigma^l) q^{-l} dm\right|\leq C_p\|f\|_{L^p(NA)}  \|\varphi\|_{L^{p'}(NA)}.
\end{align}
As in  the case $ 1<p<2 $, we decompose  $T_\Psi f$ into two parts, namely $T_\Psi ^+ f$ and $ T_\Psi^- f $, where 
	$$ T_{\Psi}^{\pm}f(m\sigma^{l})= C  \int\limits_{N}\sum\limits_{j\in\mathbb{Z}}f(n\sigma^{j}) 
	\left(\int\limits_{\mathbb{T}}\Psi(m\sigma^l,s) \mathbf{c}(-s)^{-1}\,q^{(is-1/2)|\sigma^{-j}n^{-1}m\sigma^{l}\cdot o|}ds\right)\chi_{\pm}(l-j)q^{-j} \,dn.$$
We first  prove that $  T_{\Psi}^{+}$ satisfies  \eqref{eqn:to_prove_T_psi_bdd_p>2}. Sending the contour of integration in the inner integral from $ \T $ to $ \T+i\delta_{p} $ and noting that $\delta_{p}=\delta_{p^\prime} =1/p'-1/2$ (as $ p>2$), we get \eqref{eqn:equi_expn_of_T_psi_+} with $p$ being replaced by $ p^\prime $.
 Then  applying the same change of variables as in the proof of Theorem \ref{thm:tpsi_plus_bdd}, we obtain
\begin{multline}\label{eqn_equiv_defn_of_T_Psi_2}
	\left\langle T_{\Psi}^{+}f, \varphi\right\rangle = C \int\limits_{N}  \int\limits_{N} \sum\limits_{j\in\mathbb{Z} } \sum\limits_{l\in\mathbb{Z}_+ }   f(n\sigma^{j})  \varphi(n\sigma^j m \sigma^{l})q^{-|m \sigma^{l}\cdot o|/p^{\prime}}q^{-j-l}\\
	\cdot \left(\int\limits_{\mathbb{T}}\Psi_{\delta_{p}}(n\sigma^j m \sigma^{l},s)\mathbf{c}_{-\delta_{p}}(-s)^{-1} q^{is| m\sigma^{l}\cdot o|}~ds\right) \,dm\,dn.
\end{multline}
Observe that $h_{\omega^{+}_{0}}(m\sigma^{l}\cdot o)$ and hence $|m\sigma^l\cdot o|$ is strictly positive for all $l>0$. Consequently, integrating by parts the inner integrals of \eqref{eqn_equiv_defn_of_T_Psi_2} for all $l\in\mathbb{N}$, we deduce that
\begin{multline*}
	 \left\langle T_{\Psi}^{+}f, \varphi\right\rangle = C \int\limits_{N}  \int\limits_{N} \sum\limits_{j\in\mathbb{Z} } \sum\limits_{l \in \N }   f(n\sigma^{j})  \varphi(n\sigma^jm \sigma^{l})q^{-|m \sigma^{l}\cdot o|/p^{\prime}}q^{-j-l}\\
\cdot\frac{1}{|m\sigma^l\cdot o|^2} \left( \int\limits_{\mathbb{T}} \frac{d^2}{ds^2}\left(\Psi_{\delta_{p}}(n\sigma^j m \sigma^{l},s)\mathbf{c}_{-\delta_{p}}(-s)^{-1}\right) q^{is| m \sigma^{l}\cdot o|}~ds\right) \,dm\,dn\\
+ C \int\limits_{N}  \int\limits_{N} \sum\limits_{j\in\mathbb{Z} }   f(n\sigma^{j})  \varphi(n\sigma^jm ) q^{-j-|m \cdot o|/p^{\prime}}\\
\cdot  \left( \int\limits_{\mathbb{T}}  \Psi_{\delta_{p}}(n\sigma^j m,s)\mathbf{c}_{-\delta_{p}}(-s)^{-1} q^{is| m\cdot o|}~ds\right) \,dm\,dn.
\end{multline*}
Taking modulus on both the sides, using hypothesis \eqref{eqn_regular_condn_Psi_thm2}, the smoothness of $\mathbf{c}_{-\delta_{p}}(-s)^{-1}$ on $\T$, and then applying H\"older's inequality,  we get
\begin{multline}\label{eqn_2nd_abel} 
	\left|	\left\langle T_{\Psi}^{+}f, \varphi\right\rangle\right| \leq C_{p}  \left(~\sup\limits_{(x,z)\in\mathfrak{X}\times S_{p},~k=0,1,2}~\left|\frac{d^{k}}{dz^{k}}\Psi(x,z)\right|~\right)   \\  \cdot \left[\sum\limits_{j\in\mathbb{Z} } \sum\limits_{l \in \N }   F(\sigma^{j})   \varPhi(\sigma^{l+j})  q^{-j-l}
	 \int\limits_{N} \frac{q^{-|m \sigma^{l}\cdot o|/p^{\prime}}}{|m\sigma^l\cdot o |^2} dm \right.\\
	 \left. + \sum\limits_{j\in\mathbb{Z} }  F(\sigma^{j})   \varPhi(\sigma^{j})  q^{-j}  \int\limits_{N} q^{-|m\cdot o|/p^{\prime}} dm\right].
\end{multline}
Now using the evenness of  Abel transform of a radial function, it follows that  
$$\int\limits_{N}\frac{q^{-|m \sigma^{l}\cdot o|/p^{\prime}}}{|m\sigma^l\cdot o |^2} \, dm= q^{l} \int\limits_{N}\frac{q^{-|m \sigma^{-l}\cdot o|/p^{\prime}}}{|m\sigma^{-l}\cdot o |^2} \, dm, \quad \text{ for all } l\in \N.$$
Implementing the above fact in \eqref{eqn_2nd_abel} and following the same line of arguments as in the proof of Theorem \ref{thm:tpsi_plus_bdd}, we finally obtain
$$\left|	\left\langle T_{\Psi}^{+}f, \varphi\right\rangle\right| \leq C_{p} \left(~\sup\limits_{(x,z)\in\mathfrak{X}\times S_{p},~k=0,1,2}~\left|\frac{d^{k}}{dz^{k}}\Psi(x,z)\right|~\right)   \|f\|_{L^p(NA)} \|\varphi\|_{L^{p^{\prime}}(NA)}.$$
Next, we shall estimate $T_\Psi^-$. Proceeding in a similar way as in the proof of $T_{\Psi}^{+}$, we get
\begin{multline*}
	\left\langle T_{\Psi}^{-}f, \varphi\right\rangle = C \int\limits_{N}  \int\limits_{N} \sum\limits_{j\in\mathbb{Z} } \sum\limits_{l<0}   f(n\sigma^{j})  \varphi(n\sigma^j m \sigma^{l})q^{-|m \sigma^{l}\cdot o|/p^{\prime}}q^{-j-l}\\
	\cdot \left(\int\limits_{\mathbb{T}}\Psi_{\delta_{p}}(n\sigma^j m \sigma^{l},s)\mathbf{c}_{-\delta_{p}}(-s)^{-1} q^{is| m\sigma^{l}\cdot o|}~ds\right) \,dm\,dn.
\end{multline*}
Since $l<0$ in the equation above, using Lemma \eqref{nproperties} (2), we have
 \begin{multline*}
	\left\langle T_{\Psi}^{-}f, \varphi\right\rangle = C \int\limits_{N}  \int\limits_{N} \sum\limits_{j\in\mathbb{Z} } \sum\limits_{l<0}   f(n\sigma^{j})  \varphi(n\sigma^j m \sigma^{l})q^{(l-|m\cdot o|)/p^{\prime}}q^{-j-l}\\
	\cdot \left(\int\limits_{\mathbb{T}}\Psi_{\delta_{p}}(n\sigma^j m \sigma^{l},s)\mathbf{c}_{-\delta_{p}}(-s)^{-1} q^{is| m\sigma^{l}\cdot o|}~ds\right) \,dm\,dn.
\end{multline*}
An application of H\"older's inequality and \eqref{fphi} finally gives us
\begin{align*}
|\left\langle T_{\Psi}^{-}f, \varphi\right\rangle| &\leq C_{p}\|\Psi\|_{L^{\infty}(\mathfrak{X}\times S_{p})} \int\limits_{N} \sum\limits_{j\in\mathbb{Z} } \sum\limits_{l<0}   F(\sigma^{j})  \varPhi(\sigma^{l+j})q^{(l-|m\cdot o|)/p^{\prime}}q^{-j-l}\,dm\\
&\leq C_{p}\|\Psi\|_{L^{\infty}(\mathfrak{X}\times S_{p})} \|f\|_{L^p(NA)} \|\varphi\|_{L^{p^{\prime}}(NA)},
\end{align*}
which is the desired conclusion. \qed

%%%%%%%%%%%%%%%%%%%%%%%%%%%%%%%%%%%%%%%%%%%%%%%%%%%%%%%%%%%%%%%%%%%%%%%%%%%%

\textbf{Acknowledgements.}  The authors would like to thank Prof. Pratyoosh Kumar and Prof. Sanjoy Pusti for many useful discussions during the course of this work. The first author was supported by research fellowship from CSIR (India). The second author gratefully acknowledges the support provided by NBHM (National Board of Higher Mathematics) post-doctoral fellowship (Number: 0204/3/2021/R$\&$D-II/7386) from the Department of Atomic Energy (DAE), Government of India. 
	
%%%%%%%%%%%%%%%%%%%%%%%%%%%%%%%%%%%%%%%%%%%%%%%%%%%%%%%%%%%%%%%%%%%%%%%%%%%%%%%%%%%%%%%%%%%%%%%%%%%%%%%%%%%%%%%%

\bibliographystyle{plain}
\bibliography{References}

\end{document}